\newtheorem{theorem}{Theorem}
\newtheorem{proposition}[theorem]{Proposition}
\newtheorem{corollary}[theorem]{Corollary}
\newtheorem{definition}{Definition}
\newtheorem{example}{Example}
\newtheorem{remark}{Remark}
\newtheorem*{theorem1*}{Theorem I}
\newtheorem*{theorem2*}{Theorem II}
\DeclareMathAlphabet{\pazocal}{OMS}{zplm}{m}{n}
\newcommand{\Ga}{\mathcal{G}}
\newcommand{\R}{\mathbb{R}}
\newcommand{\h}{\mathbb{H}}
\begin{document}

\title{Translating solitons in Riemannian products}

\author[J. H. de Lira ]{Jorge H. de Lira}
\address[Lira]{
  Departamento de Matem\'atica,
  Universidade Federal do Cear\'a, Bloco 914, Campus do Pici,
  Fortaleza, Cear\'a, 60455-760, Brazil.
}
\email{jorge.lira@mat.ufc.br}
\author[F. Mart\'in]{Francisco Mart\'\i{}n}
\address[Mart\'in]{
  Departamento de Geometr\'\i{}a y Topolog\'\i{}a,
  Universidad de Granada,
  18071 Granada, Spain.
}
\email{fmartin@ugr.es}

\thanks{J. H. de Lira  is supported by PRONEX/FUNCAP/CNPq PR2-0101-00089.01.00-15 and CNPq/Edital Universal 409689/2016-5.  F. Mart\'in is partially supported by the
  MINECO/FEDER grant MTM2014-52368-P and  by the
Leverhulme Trust grant IN-2016-019. }
\date{\today}
\maketitle
\begin{abstract}
In this paper we study solitons invariant with respect to the flow generated by a complete Killing vector field in a ambient Riemannian manifold. A special case occurs when the ambient manifold  is the Riemannian product $(\mathbb{R} \times P, {\rm d}t^2+g_0)$ and the Killing field is $X=\partial_t$. Similarly to what happens in the Euclidean setting, we call them  {\it translating solitons}. 
We see that a translating soliton in $\mathbb{R} \times P$ can be seen as a minimal submanifold for a weighted volume functional. Moreover we show that this kind of solitons appear in a natural way in the context of a monotonicity formula for the mean curvature flow in $\mathbb{R} \times P$.
When $g_0$ is rotationally invariant and its sectional curvature is non-positive, we are able to characterize all the rotationally invariant translating solitons. Furthermore, we use these families of new examples as barriers to deduce several non-existence results.
\end{abstract}

\section{Introduction}



The study of solitons of the mean curvature flow is intimately associated to the nature of singularities
of this flow. In this paper, we focus on families of solitons invariant with respect to the flow generated by a complete Killing vector field in a ambient Riemannian manifold. Those one parameter families can be seen, up to inner diffeomorphisms, as ancient solutions of the mean curvature flow which evolve by ambient isometries.

A special case occurs when the Riemannian manifold $(\bar M^{n+1},g)$ is the Riemannian product $(\mathbb{R} \times P, {\rm d}t^2+g_0)$ where $(P^n,g_0)$ is a Riemannian  $n$-manifold 
and ${\rm d}t^2$ denotes the Euclidean metric of the real line. In this class of manifolds, the flow generated by the coordinate vector field $X=\partial_t$ is parallel and defines a one-parameter flow of  isometries  we will refer to as {\em vertical translations}. Hence, we say that a submanifold $M$  in $\R \times P$ is a translating soliton of the mean curvature flow if 
\[
c X^{\perp}=\mathbf{H},
\]
where $c\in\mathbb{R}$ is a constant that indicates the velocity of the flow. When $M$ is a hypersurface, the previous equation is equivalent to $H=c   \langle X,N\rangle$, where $N$ means the Gauss map of $M$ and we have denoted $g(\cdot, \cdot)=\langle \cdot,\cdot\rangle$.

Translating solitons have been extensively studied in the particular case $P=\mathbb{R}^n$ (see for instance \cite{Altschuler-Wu, Bourni-Langford, MSHS15, MPGSHS15}). In the Euclidean setting, if the initial immersion has nonnegative mean curvature, then it is known \cite{HS} that any limiting flow of a {\em type-II singularity} has convex time slices hypersurfaces $M_\tau$. Furthermore, either $M_\tau$ is a strictly convex translating soliton or (up to rigid motion) $M_\tau=\R^{n-k} \times \Sigma_\tau^k$, where $\Sigma_\tau^k$ is a lower $k$-dimensional strictly convex translating soliton in $\mathbb{R}^{k+1}$. 

Similarly to what happens in the Euclidean space, a translating soliton $M$ in $\mathbb{R} \times P$ can be seen as a minimal surface for the weighted volume functional 
\[
\mathcal{A}_{c\eta} [M]= \int_M e^{c\eta}\, {\rm d}M
\]
where $\eta$ represents the height Euclidean function, that is, the restriction of the natural coordinate $t\in \mathbb{R}$ to $M$.
Furthermore, in Subsection \ref{subsec:mono} we shall see that the translating soliton equation appears naturally in the context of a monotonicity formula associated to the mean curvature flow in $\mathbb{R}\times P$. 

The last decade has witnessed the construction of many families of new translating solitons in $\R^3$ using different techniques, see \cite{DDPN, hoffman, Nguyen09, Nguyen13, Nguyen15, Smith}.
Clutterbuck, Schn\"urer and Schulze in \cite{CSS} (see also \cite{Altschuler-Wu}) proved that there exists an entire, rotationally symmetric, strictly convex graphical translator in $\R^{n+1}$, for $n\geq2$. This example is known as translating paraboloid or bowl soliton. Moreover, they classified all the translating solitons of revolution, giving a one-parameter family of rotationally invariant cylinders $C_\varepsilon$, where $\varepsilon$ represents the neck-size of the cylinder.  The limit of the family $C_\varepsilon$, as $\varepsilon \to 0$, consists of a double copy of the bowl soliton with a singular point at the origin.

In this paper we proved that a similar family of translating solitons exists when the ambient manifold is a Riemmanian product $\mathbb{R} \times P$ where the {\em Riemannian metric is rotationally invariant} (see Section \ref{sec:equi}) and has {\em non-positive sectional curvature}.

\begin{theorem1*}
Let $P$ be a $n$-dimensional complete Riemannian manifold endowed with a rotationally invariant metric $g_0$ whose sectional curvatures are non-positive. 
Then there exists a one-parameter family of rotationally symmetric translating solitons $M_\varepsilon$, $\varepsilon \in [0, +\infty),$ embedded into the Riemannian product $\bar M = \mathbb{R}\times P$. The translating soliton $M_0$ is an entire graph over $P$ whereas each $M_\varepsilon$, $\varepsilon>0$, is a bi-graph over the exterior of a geodesic ball in $P$ with radius $\varepsilon$.
\end{theorem1*}

In the above situation, $P$ is a Cartan-Hadamard $n$-manifold and it makes sense to speak about the ideal boundary of $P$, that we will denote as $\partial_\infty P$. In this context we can obtain the following result:
\begin{theorem2*}
Let $P$ be a $n$-dimensional complete Riemannian manifold endowed with a rotationally invariant metric $g_0$ whose sectional curvatures are negative. There exists a one-parameter family of translating solitons $M^\infty_\varepsilon$, $\varepsilon \in [0, +\infty),$ embedded in $\bar M = \mathbb{R}\times P$ and foliated by horospheres in parallel hyperplanes $P_t$. The ideal points in each $M_\varepsilon^\infty$ lie in an asymptotic line of the form $\mathbb{R}\times \{x_\infty\}$ with $x_\infty \in \partial_\infty P$. 
\end{theorem2*}
We would like to point out that, in the particular case when $P=\mathbb{H}^n$, these examples were already constructed by E. Kocakusakli, M. A. Lawn and M. Ortega in \cite{ortega1, ortega2}.

In the Euclidean space,  X.-J. Wang  \cite{Wang}  characterized the bowl soliton  as the only convex translating soliton in $\mathbb{R}^{n+1}$ which is an entire graph. Very recently, J. Spruck and L. Xiao \cite{spruck-xiao} have proved that a translating soliton which is graph over the whole $\R^2$ must be convex. One can think that it could be also the case of any Riemannian manifold $\R \times P$ when $P$ is endowed with a rotationally invariant metric. However, in this paper we show examples of entire graphs in $\R \times \h^n$ which are not hypersurfaces of revolution, see Subsection \ref{subsec:grim}. These examples are foliated by submanifolds equidistant to a fixed geodesic in horizontal slices of $\R \times \h^n$. This behaviour mimics the geometry the grim reaper cylinder in $\R^{n+1}$. For this reason, we call these examples {\em entire grim reaper graphs.}

The families of examples given by Theorems I and II can be used as barriers to prove that if $P$ satisfies the hypotheses of the theorems, then there are no complete proper translating solitons in $\R \times P$ with compact (possibly empty) boundary and contained in a horocylinder of $\R\times P$. In particular, we prove that there are no  complete ``cylindrically bounded'' solitons in the same spirit of a similar result about minimal submanifolds by Al\'\i as, Bessa and Dajczer \cite{ABD}.








\section{Translating mean curvature flows}
\label{ssf}

Let $(P^{n}, g_0)$ be a  complete Riemannian
manifold and denote the product $\mathbb{R}\times P^n$ by $\bar M^{n+1}$, endowed with the Riemannian product metric.  Given a $m$-dimensional manifold $M^m$ with $m\le n$ and $\omega_* < 0 <\omega^*$, we consider a
differentiable map
\begin{equation}
\Psi:(\omega_*,\omega^*)\times M\to \bar M
\end{equation}
such that $\Psi_{\tau} = \Psi(\tau,\cdot)$ is an immersion, for all $\tau\in (\omega_*,\omega^*)$. We denote
 $\psi=\Psi_0$. The submanifolds $\Psi_{\tau}(M),\,\tau\in
(\omega_*,\omega^*),$ are evolving by their mean curvature vector
field if
\begin{equation}
\frac{d\Psi}{d\tau}=\Psi_* \frac{\partial}{\partial \tau} = {\bf
H},
\end{equation}
 where
\begin{equation}
{\bf H} =\bigg(\sum_{i=1}^{m} 
\bar\nabla_{\Psi_{\tau*}{\sf e}_i} \Psi_{\tau*} {\sf e}_i\bigg)^\perp
\end{equation}
is the (non-normalized) mean curvature vector of $\Psi_\tau$. Here and in what follows $\perp$ indicates the projection onto the normal bundle; the local  tangent frame $\{{\sf
e}_i\}_{i=1}^m$ is orthonormal with respect to the metric induced in $M$ by $\Psi_\tau$. The notations $\langle\cdot, \cdot\rangle$ and $\bar\nabla$ stand for the Riemannian metric and connection in $\bar M$, respectively.

Denoting by $t$ the natural coordinate in the factor $\mathbb{R}$ in $\bar M$, the coordinate vector field $X=\partial_t$ is a parallel vector field on $\bar M$. We set $\Phi:\mathbb{R}\times \bar M \to \bar M$ to denote the
flow generated by $X$, that is, the map given by
\[
\Phi(s, (t,x)) = (s+t, x), \quad (t,x)\in \bar M, \,\, s\in \mathbb{R}
\]


\begin{definition}
\label{selfsimilar} Let $\bar M^{n+1}$ be the Riemannian product of $\mathbb{R}$ and a Riemannian manifold ${\rm (}P^n, g_0{\rm)}$. Given a $m$-dimensional Riemannian manifold $M^m$ we say that
a mean curvature flow $\Psi:(\omega_*, \omega^*)\times M \to \bar
M$ is \emph{translating} if  there exists an isometric immersion
$\psi:M \to \bar M$ and a reparametrization $\sigma:(\omega_*, \omega^*)\to \mathbb{R}$ such that
\begin{equation}
\Psi_{\tau}(M) =\Phi_{\sigma(\tau)}(\psi(M)),
\end{equation}
for all $\tau\in (\omega_*,\omega^*)$, where
 $\Phi:\mathbb{R}\times \bar M\to\bar M$ is the flow generated by $X$. 
\end{definition}

This condition means that
\begin{equation}
\label{}
\widetilde\Psi_\tau (M) = \psi(M)
\end{equation}
where the map $\widetilde\Psi:(\omega_*, \omega^*)\times M \to \bar M$ is given by
\begin{equation}
\label{psiphi} \Psi(\tau,x) =
\Phi(\sigma(\tau), \widetilde\Psi(\tau,x)), 
\end{equation}
for any $(\tau, x)\in (\omega_*, \omega^*)\times M$.

\begin{example}\label{product-example} {\rm (Translating mean curvature flow of graphs.)}  Fixed $t_0\in \mathbb{R}$, we may define a
mean curvature flow in $\bar M$ by
\[
\Psi(\tau, x) = (c, x), \quad \tau\in \mathbb{R},
\]
for any $x\in P_c=\{c\}\times P$.  Less trivial examples may be given by  translating graphs: consider a function $u: (\omega_*, \omega^*)\times M  \to \mathbb{R}$ and define $\Psi: (\omega_*, \omega^*)\times M \to \mathbb{R}$ as
\[
\Psi(\tau, x) = (u(\tau, x), x).
\]
This defines a mean curvature flow if and only if $u$ satisfies the quasilinear parabolic equation
\begin{equation}
\label{pde-parabolic}
\frac{\partial u}{\partial \tau} =W \, {\rm div} \bigg(\frac{\nabla u}{W}\bigg),
\end{equation}
where 
\[
W = \sqrt{1+|\nabla u|^2}
\]
onde $\nabla$ and ${\rm div}$ are, respectively, the Riemannian connection and divergence in  $(P, g_0)$. This notion of translating soliton has been extensively studied in Euclidean spaces, see for instance {\rm \cite{Altschuler-Wu, CSS, HS, ILMANEN, MSHS15, SHAHRIYARI15, Wang}.} Our definition is the natural setting to these special flows in Riemannian products $\mathbb{R} \times P$.
\end{example}

Next, we present some fundamental consequences of Definition \ref{selfsimilar} that motivates the notion of translating soliton in Riemannian products.

\begin{proposition}
\label{soliton-prop} Let $\Psi:(\omega_*, \omega^*)\times M \to
\bar M$ be a translating mean curvature flow with respect to the parallel vector field $X=\partial_t$. Then for all
$\tau\in (\omega_*, \omega^*)$ there exists a constant $c_\tau$
such that
\begin{equation}
\label{solitonA} c_\tau X =c_\tau\Psi_{\tau *}T + {\bf H},
\end{equation}
where ${\bf H}$ is the mean curvature vector of $\Psi_\tau= \Psi(\tau, \cdot\,)$ and
$T\in\Gamma(TM)$ is the pull-back by $\Psi_\tau$ of the tangential
component of $X$. Furthermore, 
\begin{equation}
\label{solitonC} \bar\nabla^\perp {\bf H}+c_\tau\, II(T,\cdot)=0,
\end{equation}
where $II$ is the second fundamental form of $\Psi_\tau$ and $\bar\nabla^\perp$ is its normal connection.  
Moreover,
\begin{equation}
\label{solitonB} II_{-\mathbf{H}} + \frac{c_\tau}{2}\pounds_T g =0,
\end{equation}
where $g$ is the metric induced in $M$ by $\Psi_\tau$ and $II_{-\bf
H}$ is its second fundamental form in the direction of $-{\bf H}$. 
\end{proposition}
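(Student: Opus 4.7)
The plan is to peel the three statements apart: (\ref{solitonA}) follows directly from differentiating the translating-flow ansatz, while (\ref{solitonC}) and (\ref{solitonB}) are obtained by taking normal and tangential projections of identities derived from the parallelism of $X$.

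First I would differentiate (\ref{psiphi}) in $\tau$. Since $\Phi_s$ is the flow of $X$, we have $\Phi_{s*}X=X$ and, because $\widetilde\Psi_\tau(M)=\psi(M)$ for every $\tau$, the vector $\partial\widetilde\Psi/\partial\tau$ is tangent to $\psi(M)$, so $\Phi_{\sigma(\tau)*}\bigl(\partial\widetilde\Psi/\partial\tau\bigr)$ is tangent to $\Psi_\tau(M)$. Therefore
\[
\frac{d\Psi}{d\tau}=\sigma'(\tau)\,X+\bigl(\text{vector tangent to }\Psi_\tau(M)\bigr).
\]
Combining with $d\Psi/d\tau=\mathbf{H}$ and projecting onto the normal bundle gives $\mathbf{H}=\sigma'(\tau)\,X^\perp$, that is $\mathbf{H}=c_\tau(X-\Psi_{\tau*}T)$ with $c_\tau:=\sigma'(\tau)$, which is exactly (\ref{solitonA}).

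For (\ref{solitonC}) I would exploit that $X=\partial_t$ is parallel in $\bar M$, so $\bar\nabla_Y X=0$ for every tangent $Y$ on $M$. Differentiating $\mathbf{H}=c_\tau(X-X^\top)$ and taking the normal part yields
\[
\bar\nabla^\perp_Y\mathbf{H}=-c_\tau\bigl(\bar\nabla_Y X^\top\bigr)^\perp=-c_\tau\,II(Y,T),
\]
which is (\ref{solitonC}). For (\ref{solitonB}), by definition $II_{-\mathbf{H}}(Y,Z)=-\langle\mathbf{H},II(Y,Z)\rangle=-c_\tau\langle X,II(Y,Z)\rangle$, so the task reduces to identifying $\langle X,II(Y,Z)\rangle$. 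Splitting $\bar\nabla_Y Z=\nabla_Y Z+II(Y,Z)$ and using $\bar\nabla X=0$ one gets
\[
\langle II(Y,Z),X\rangle=Y\langle Z,X\rangle-\langle\nabla_Y Z,T\rangle=g(\nabla_Y T,Z),
\]
and by symmetry of $II$, the same expression equals $g(Y,\nabla_Z T)$. Adding the two yields $2\langle II(Y,Z),X\rangle=(\pounds_T g)(Y,Z)$, from which (\ref{solitonB}) drops out.

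The only genuinely delicate step is the last one, where I must be careful that $T$ is the tangential projection of $X$ and is in fact the $g$-gradient of the height function $\eta=t\circ\Psi_\tau$; this is what forces $\nabla T$ to be a symmetric $(1,1)$ tensor and legitimises the identification $2\,g(\nabla_Y T,Z)=(\pounds_T g)(Y,Z)$. Everything else reduces to combining the parallelism $\bar\nabla X=0$, the Gauss decomposition, and the definition of the second fundamental form, so the proof stays entirely within linear algebra on the normal and tangent bundles of $\Psi_\tau$.
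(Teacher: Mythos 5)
Your proposal is correct. One caveat on the comparison: the paper never actually writes out a proof of Proposition \ref{soliton-prop} --- it defers to \cite{ALR} for the general warped-product case --- so the natural benchmark is its proof of the sister statement, Proposition \ref{soliton-prop-2}, which derives the same two identities from the soliton equation. Your treatment of (\ref{solitonC}) and (\ref{solitonB}) is essentially that proof: both arguments rest on the parallelism $\bar\nabla X=0$ together with the Gauss formula, the only cosmetic difference being that the paper symmetrizes $0=c\langle\bar\nabla_{\psi_*U} X,\psi_*V\rangle+c\langle\bar\nabla_{\psi_*V} X,\psi_*U\rangle$ after substituting $cX=c\,\psi_*T+\mathbf{H}$, while you first establish $\langle II(Y,Z),X\rangle=g(\nabla_Y T,Z)$ and then symmetrize using the symmetry of $II$. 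What you supply that the paper omits is the derivation of (\ref{solitonA}) and the identification of the constant: differentiating (\ref{psiphi}) in $\tau$, using that $\Phi$ is the flow of $X$ and that $\partial\widetilde\Psi/\partial\tau$ is tangent to $\psi(M)$ because $\widetilde\Psi_\tau(M)=\psi(M)$, and then projecting onto the normal bundle to get $\mathbf{H}=\sigma'(\tau)X^\perp$, i.e.\ $c_\tau=\sigma'(\tau)$. That step is sound, and it is precisely the ingredient the paper outsources to \cite{ALR}. A final quibble: the caveat in your closing paragraph is unnecessary. You never need to know that $T$ is a gradient, since the identity $(\pounds_T g)(Y,Z)=g(\nabla_Y T,Z)+g(\nabla_Z T,Y)$ holds for an arbitrary vector field, and the symmetry $g(\nabla_Y T,Z)=g(\nabla_Z T,Y)$ that you invoke is already forced by the symmetry of $II$ through your own formula $\langle II(Y,Z),X\rangle=g(\nabla_Y T,Z)$.
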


We refer the reader to \cite{ALR} for the proof of this proposition in the more general case of warped product spaces.

Motivated by the above geometric setting, we define a general notion of translating soliton  in Riemannian products as follows.

\begin{definition}\label{soliton-definition}
An isometric immersion $\psi:M^m\to \bar M^{n+1} = \mathbb{R}\times P^n$ is a \emph{translating soliton}  with respect to $X=\partial_t$  if
\begin{equation}
\label{solitonA-2}c\, X^\perp = {\bf H}
\end{equation}
along $\psi$ for some constant $c\in \mathbb{R}$. With a slight abuse of notation, we also say that
the submanifold  $\psi(M)$ itself is the translating
soliton {\rm (}with respect to the vector field $X${\rm )}. If $m=n$, that is, for codimension $1$, the condition
becomes
\begin{equation}
\label{soliton-scalar} H=c\,\langle X, N\rangle,
\end{equation}
where  the mean curvature $H$ with respect to the normal vector field $N$ along $\psi$ is given by
\begin{equation}
\label{HHN}
{\bf H} = HN.
\end{equation}
\end{definition}

We observe that equation (\ref{solitonA-2}) is enough to deduce the following important consequences that we have considered in Proposition \ref{soliton-prop} in the context of a geometric flow.



\begin{proposition}
\label{soliton-prop-2} Let $\psi:M^m\to \bar M^n$ be a translating soliton with respect to   $X=\partial_t$. Then along $\psi$ we have
\begin{equation}
\label{solitonB-2} II_{-\mathbf{H}} + \frac{c}{2}\pounds_{T} g =0,
\end{equation}
where $g$ is the metric induced in $M$ by $\psi$ and $II_{-\bf H}$
is its second fundamental form in the direction of $-{\bf H}$. Here   the vector field $T$ is defined by $\psi_*T = X^\top$. Furthermore
\begin{equation}
\label{solitonC-2} \bar\nabla^\perp {\bf H}+c\, II(T,\cdot)=0,
\end{equation}
where $II$ is the second fundamental form of $\psi$ and $\bar\nabla^\perp$ is its normal connection.
\end{proposition}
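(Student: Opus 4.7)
The key observation is that $X=\partial_t$ is parallel in the product metric, i.e.\ $\bar\nabla X = 0$; this single fact converts the pointwise identity $cX^\perp = \mathbf{H}$ into the two tensorial identities \eqref{solitonB-2} and \eqref{solitonC-2}.

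The plan is to rewrite the soliton equation as
\[
cX = c\,\psi_* T + \mathbf{H}
\]
and apply $\bar\nabla_{\psi_* U}$ to both sides for an arbitrary $U\in\Gamma(TM)$. Parallelism of $X$ makes the left-hand side vanish, so
\[
c\,\bar\nabla_{\psi_* U}(\psi_* T) + \bar\nabla_{\psi_* U}\mathbf{H} = 0.
\]
Next I would expand each term using the Gauss and Weingarten formulas and separate this identity into its tangential and normal components; my claim is that these two components are exactly \eqref{solitonB-2} and \eqref{solitonC-2} respectively.

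The normal part is immediate: Gauss gives $(\bar\nabla_{\psi_*U}(\psi_* T))^\perp = II(U,T)$ and Weingarten gives $(\bar\nabla_{\psi_*U}\mathbf{H})^\perp = \bar\nabla^\perp_U \mathbf{H}$, so matching normal components yields $\bar\nabla^\perp_U\mathbf{H} + c\,II(T,U) = 0$, which is \eqref{solitonC-2}. For the tangential part, $(\bar\nabla_{\psi_*U}(\psi_*T))^\top = \psi_*(\nabla_U T)$ and $(\bar\nabla_{\psi_*U}\mathbf{H})^\top = -A_{\mathbf{H}}U$, where $A_{\mathbf{H}}$ denotes the shape operator characterized by $\langle A_{\mathbf{H}}U,V\rangle = \langle II(U,V),\mathbf{H}\rangle$. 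Pairing with $\psi_* V$ gives $c\,\langle\nabla_U T, V\rangle = \langle II(U,V),\mathbf{H}\rangle$, whose right-hand side is symmetric in $(U,V)$; symmetrizing the left-hand side then delivers
\[
\frac{c}{2}(\pounds_T g)(U,V) = \langle II(U,V),\mathbf{H}\rangle = -II_{-\mathbf{H}}(U,V),
\]
which is \eqref{solitonB-2}.

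I expect the only real obstacle to be notational bookkeeping: distinguishing $T\in\Gamma(TM)$ from its pushforward $\psi_*T$ along the immersion, and systematically separating tangential from normal contributions on each side. The substance of the argument is a single differentiation of a parallel vector field against the soliton relation, so the static proof mirrors the flow-based derivation of Proposition \ref{soliton-prop} but dispenses with the time parameter entirely. The degenerate case $c=0$ reduces to $\mathbf{H}=0$, in which both identities collapse to $0=0$ and nothing further needs to be checked.
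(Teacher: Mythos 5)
Your proposal is correct and follows essentially the same route as the paper: both exploit the parallelism $\bar\nabla X=0$ together with the decomposition $cX = c\,\psi_*T + \mathbf{H}$, then read off \eqref{solitonC-2} from the normal component (Gauss/Weingarten) and \eqref{solitonB-2} from the tangential component paired against $\psi_*V$ and symmetrized. The only cosmetic difference is that you differentiate the vector identity once and split it, whereas the paper performs the symmetrized scalar computation and the normal projection as two separate calculations; in fact your unsymmetrized tangential identity $c\,\nabla_U T = A_{\mathbf{H}}U$ is marginally stronger than what the paper records.
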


\noindent \emph{Proof.}  Using (\ref{solitonA-2}) by a direct computation we have for any tangent vector fields $U, V\in \Gamma(TM)$
\begin{eqnarray*}
& & 0=c\langle \bar\nabla_{\psi_* U} X, \psi_* V\rangle+c\langle \bar\nabla_{\psi_* V} X, \psi_* U\rangle\\
& &\,\, = c\langle \bar\nabla_{\psi_* U} \psi_* T, \psi_* V\rangle+c\langle \bar\nabla_{\psi_* V} \psi_* T, \psi_* U\rangle + \langle \bar\nabla_{\psi_* U} {\bf H}, \psi_* V\rangle+\langle \bar\nabla_{\psi_* V} {\bf H}, \psi_* U\rangle\\
& &\,\, = c\langle \nabla_{U} T, V\rangle + c\langle \nabla_{V} T, U\rangle+ 2 II_{-\bf H} (U,V).
\end{eqnarray*}
Hence,
\[
II_{-\bf H} +\frac{c}{2}\pounds_{T} g = 0.
\]
Now one has
\begin{eqnarray*}
& & 0 = c(\bar\nabla_{\psi_* U} X)^\perp  = c (\bar\nabla_{\psi_*U}\psi_*T)^\perp + c(\bar\nabla_{\psi_* U} X^\perp)^\perp\\
& & \,\, = c\,II (T, U) + (\nabla_{\psi_*U}{\bf H})^\perp = c\, II (T, U) + \nabla_{\psi_*U}^\perp{\bf H}
\end{eqnarray*}
what concludes the proof of Proposition \ref{soliton-prop-2}. \hfill $\square$

\begin{remark}
\label{ricci-mcf} The notion of translating solitons and propositions \ref{soliton-prop} and \ref{soliton-prop-2} are particular cases of the general notions of self-similar mean curvature flow and mean curvature flow soliton as formulated  by L. Al\'\i as, J. Lira and M. Rigoli in {\rm \cite{ALR}}.
\end{remark}

\subsection{Variational setting}

For the sake of simplicity in this section we restrict ourselves to the case of codimension $1$ translating solitons in $\bar M^{n+1} = \mathbb{R}\times P^n$. We set   
\[
\eta(x) = (\pi\circ\psi)(x), \quad x\in M,
\]
where $\pi: \mathbb{R}\times P \to \mathbb{R}$ is the projection $\pi(t,x) = t$ and $\psi: M^n\to \bar M^{n+1}$ is a given isometric immersion.

We introduce, for a fixed $c\in \mathbb{R}$, the weighted volume functional
\begin{equation}
\label{vol-Mw}
\mathcal{A}_{c\eta} [\psi,\Omega]={\rm vol}_{c\eta} (\psi(\Omega)) = \int_\Omega e^{c\eta}\, {\rm d}M, 
\end{equation}
where ${\rm d}M$ is the volume element induced in $M$ from $\psi$  and $\Omega$ is a relatively compact domain of $M$. We have the following 

\begin{proposition}
\label{first-variation}
Let $\psi: M^n \to \bar M^{n+1}= \mathbb{R}\times P$ be a codimension $1$ translating soliton with respect to the parallel vector field $X= \partial_t$. Then the equation 
\begin{equation}
\label{sol-scalar-bis}
H = c\,\langle X, N\rangle
\end{equation}
on the relatively compact domain $\Omega \subset M$ is the Euler-Lagrange equation of the functional {\rm (\ref{vol-Mw})}. Moreover, the second variation formula for normal variations is given by
\begin{equation}
\label{second-variation-vol}
\delta^2\mathcal{A}_{c\eta}[\psi, \Omega]\cdot (f,f)=-\int_M e^{c\eta}\,
f L_X f\, {\rm d}M,\quad f\in C^\infty_0(\Omega),
\end{equation}
where the stability operator $L_X$ is defined by
\begin{equation}
L_X u = \Delta_{-c\eta} u + (|II|^2 +{\rm Ric}_{\bar M}(N, N)) u
\end{equation}
where
\begin{equation}
\label{deltaT-op}
\Delta_{-c\eta} =\Delta + c\langle \nabla \eta, \nabla\, \cdot\,\rangle= \Delta +c \langle X, \nabla\,\cdot\,\rangle.
\end{equation}
\end{proposition}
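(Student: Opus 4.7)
The plan is to compute the first and second variations of $\mathcal{A}_{c\eta}$ along compactly supported normal variations and read off the Euler--Lagrange equation and the stability operator.

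For the first variation, I would fix a normal variation $\psi_s$ of $\psi$ with $\partial_s\psi_s|_{s=0}=fN$, where $f\in C_0^\infty(\Omega)$. The two ingredients needed are the classical identity $\frac{d(dM_s)}{ds}|_{s=0}=-fH\,dM$ (valid under the convention $\mathbf{H}=HN$) and the elementary fact that $\frac{d\eta_s}{ds}|_{s=0}=f\langle X,N\rangle$, which follows because $X=\partial_t$ is the ambient gradient of $\pi$ and $\eta_s=\pi\circ\psi_s$. Differentiating $e^{c\eta_s}dM_s$ and substituting yields
\[
\frac{d}{ds}\bigg|_{s=0}\mathcal{A}_{c\eta}[\psi_s,\Omega]=\int_\Omega e^{c\eta}\bigl(c\langle X,N\rangle-H\bigr)f\,dM,
\]
so the fundamental lemma of the calculus of variations forces (\ref{sol-scalar-bis}).

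For the second variation at a critical point, the cleanest route is to pass to the language of weighted manifolds. Setting $\phi:=-c\,t$ on $\bar M$, the functional $\mathcal{A}_{c\eta}$ coincides with $\int e^{-\phi}\,dM$, and the soliton equation $H=c\langle X,N\rangle$ is exactly the vanishing of the weighted mean curvature $H_\phi:=H+\langle\bar\nabla\phi,N\rangle$. At such a critical point, the standard Bakry--\'Emery second variation formula for weighted area reads
\[
\delta^2\!\Bigl(\int e^{-\phi}\,dM\Bigr)\!\cdot(f,f)=-\int_M e^{-\phi}\,f\,\bigl(\Delta_\phi f+(|II|^2+\mathrm{Ric}_\phi(N,N))f\bigr)\,dM,
\]
with drift Laplacian $\Delta_\phi=\Delta-\langle\nabla\phi,\nabla\,\cdot\,\rangle$ and Bakry--\'Emery tensor $\mathrm{Ric}_\phi=\mathrm{Ric}_{\bar M}+\bar\nabla^2\phi$. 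Two features specific to our setting simplify this drastically: parallelism of $X=\partial_t$ gives $\bar\nabla dt\equiv 0$, hence $\mathrm{Ric}_\phi(N,N)=\mathrm{Ric}_{\bar M}(N,N)$, while the tangential component of $\bar\nabla\phi$ along $M$ is $-c\nabla\eta$, so $\Delta_\phi f=\Delta f+c\langle\nabla\eta,\nabla f\rangle=\Delta_{-c\eta}f$ as in (\ref{deltaT-op}). Combining these identifications delivers the operator $L_X$ and hence (\ref{second-variation-vol}).

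The hardest step, if one preferred a self-contained derivation rather than invoking the weighted formalism, would be the second variation itself: one differentiates $e^{c\eta_s}\,dM_s$ twice, invokes the classical second variation of the area element, notes that $\eta_s''\equiv 0$ again by the parallelism of $X$, and integrates the mixed term $c\langle X,\nabla f\rangle\,f$ by parts against the weighted measure $e^{c\eta}\,dM$ to bring the answer into manifestly self-adjoint form. The terms proportional to $H$ cancel precisely because of the critical-point hypothesis $H=c\langle X,N\rangle$, leaving only the geometric quantities that appear in $L_X$.
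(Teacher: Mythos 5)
Your proposal is correct, and the first variation is computed exactly as in the paper: the same two ingredients ($\tfrac{d}{ds}\,{\rm d}M_s=-fH\,{\rm d}M$ and $\tfrac{d}{ds}\eta_s=f\langle X,N\rangle$, with the paper's sign conventions) produce the same integrand $e^{c\eta}(c\langle X,N\rangle-H)f$; the only difference there is that the paper also allows a tangential part $T$ in the variation field and records that it contributes only $\int_\Omega {\rm div}(e^{c\eta}T)\,{\rm d}M$, which is immaterial for the Euler--Lagrange equation. Where you genuinely diverge is the second variation. The paper stays self-contained: at a critical point it writes $\delta^2\mathcal{A}_{c\eta}=\int_M e^{c\eta}f\,\tfrac{d}{ds}\big|_{s=0}(c\langle X_s,N_s\rangle-H_s)\,{\rm d}M$ and then feeds in the evolution equations $\bar\nabla_{\partial_s}N=-\nabla f-AT$ and $\tfrac{d}{ds}H=\Delta f+(|II|^2+{\rm Ric}_{\bar M}(N,N))f+\pounds_T H$, so the drift term $c\langle X,\nabla f\rangle$ and the zeroth-order terms of $L_X$ appear directly, with no need for the integration by parts you anticipate in your self-contained sketch. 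You instead recast $\mathcal{A}_{c\eta}$ as weighted area with density $e^{-\phi}$, $\phi=-ct$, identify the soliton equation with $H_\phi=0$, and quote the standard Bakry--\'Emery second variation formula for $f$-minimal hypersurfaces, specializing via $\bar\nabla^2\phi=0$ (parallelism of $X$) and $(\bar\nabla\phi)^{\top}=-c\nabla\eta$; these identifications are all correct and consistent with the paper's conventions. What your route buys is brevity and a conceptual placement of the result inside the smooth-metric-measure-space framework (which the paper itself gestures at when it calls solitons weighted minimal hypersurfaces); what it costs is that the core analytic content is outsourced to a cited general formula, which is itself proved by precisely the computation the paper carries out.
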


\noindent \emph{Proof.} Given $\varepsilon >0$, let  $\Psi:(-\varepsilon, \varepsilon)\times M \to \bar M$ be a variation of $\psi$ compactly supported in $\Omega\subset M$  with $\Psi(0,\,\cdot\,) = \psi$ and
normal variational vector field
\[
\frac{\partial\Psi}{\partial s}\Big|_{s=0} = fN+T
\]
for some function $f\in C^\infty_0 (\Omega)$ and a tangent vector field $T\in \Gamma(TM)$. Here, $N$ denotes a local unit normal vector field along $\psi$.  Then
\begin{eqnarray*}
& &\frac{{\rm d}}{{\rm d}s}\Big|_{s=0}{\rm vol}_{c\eta}[\Psi_s(\Omega)] =
\int_\Omega e^{c\eta}\,(c\langle X, N\rangle-H) f\, {\rm d}M + \int_\Omega {\rm div} (e^{c\eta}T)\, {\rm d}M.
\end{eqnarray*}
Hence, stationary immersions for variations fixing  the boundary of $\Omega$ are characterized by the
scalar soliton equation
\[
H-c\,\langle X, N\rangle =0\,\,\, \mbox{ on } \,\,\, \Omega \subset M
\]
which yields (\ref{sol-scalar-bis}). Now we compute the second variation formula. At a stationary immersion
we have
\begin{eqnarray*}
& &\frac{{\rm d}^2}{{\rm d}s^2}\Big|_{s=0}{\rm vol}_{c\eta}[\Psi_s(\Omega)] = 
\int_M e^{c\eta}\,\frac{d}{ds}\Big|_{s=0}(c\langle X_s,
N_s\rangle-H_s) f\, {\rm d}M.
\end{eqnarray*}
Using the fact that
\begin{equation}
\label{der-N}
\bar\nabla_{\partial_s} N = -\nabla f - AT,
\end{equation}
we compute
\begin{equation}
\label{der-angle}
\frac{{\rm d}}{{\rm d}s}\Big|_{s=0} \langle X, N\rangle = \langle
\bar\nabla_{\partial_s}X, N\rangle
+\langle X, \bar\nabla_{\partial_s} N\rangle=\langle X, -\nabla f-AT\rangle.
\end{equation}
Since
\begin{equation}
\label{der-H}
\frac{{\rm d}}{{\rm d}s}\Big|_{s=0} H = \Delta f + (|II|^2+{\rm Ric}_{\bar
M}(N,N))f + \pounds_T H,
\end{equation}
and $\nabla \eta = X^\top$ we obtain for normal variations (when $T=0$)
\begin{eqnarray*}
& & \frac{{\rm d}}{{\rm d}s}\big(H-c\langle X, N\rangle\big) =\Delta f +c\langle
X,  \nabla f\rangle +|II|^2 f + {\rm Ric}_{\bar
M}(N,N)\,f\\
& & \,\, =\Delta_{-c\eta} f + |II|^2 f + {\rm Ric}_{\bar M}(N,N) f.
\end{eqnarray*}
This finishes the proof of the proposition. \hfill $\square$

\vspace{3mm}

\noindent This proposition motivates the definition of the weighted mean curvature as
\[
\widetilde H = H - c\langle X, N\rangle.
\]
Then a mean curvature flow soliton can be considered as a weighted minimal hypersurface.

\begin{remark} Notice that $L_X H =0$.
\end{remark}

In the sequel we need the following particular version of Proposition 5.1 in \cite{ALR}

\begin{theorem}
Let $\psi: M\to \bar M = \mathbb{R}\times P$ be a translating soliton. Then 
\begin{equation}
\Delta_{-c\eta}\eta = c
\end{equation}
\end{theorem}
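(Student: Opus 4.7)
The plan is to apply the standard submanifold formula for the Laplacian of a restricted ambient function, using crucially that $X=\partial_t$ is parallel on $\bar M$, and then combine with the soliton identity $\mathbf{H}=cX^{\perp}$ and the orthogonal decomposition $X=X^{\top}+X^{\perp}$.

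First, I would recall that for an isometric immersion $\psi:M^{m}\to \bar M^{n+1}$ and any smooth function $f:\bar M\to\mathbb{R}$, one has
\begin{equation*}
\Delta(f\circ\psi)\;=\;\operatorname{tr}_{M}\bar{\operatorname{Hess}}\,f\;+\;\langle\bar\nabla f,\mathbf{H}\rangle,
\end{equation*}
where $\bar{\operatorname{Hess}}$ and $\bar\nabla$ are taken in $\bar M$, the trace is over any orthonormal frame of $T_{p}M$, and $\mathbf{H}$ is the mean curvature vector of $\psi$. Specializing to $f=t$, the coordinate on the $\mathbb{R}$-factor, we have $\bar\nabla t=X$; since $X=\partial_t$ is parallel on the Riemannian product $\bar M=\mathbb{R}\times P$, it follows that $\bar{\operatorname{Hess}}\,t\equiv 0$. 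Hence $\Delta\eta=\langle X,\mathbf{H}\rangle$.

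Next I would insert the translating soliton equation (\ref{solitonA-2}), $\mathbf{H}=cX^{\perp}$. Because $\langle X^{\top},X^{\perp}\rangle=0$, we get
\begin{equation*}
\Delta\eta\;=\;c\,\langle X,X^{\perp}\rangle\;=\;c\,\langle X^{\top}+X^{\perp},X^{\perp}\rangle\;=\;c\,|X^{\perp}|^{2}.
\end{equation*}
On the other hand, since $\psi_{*}(\nabla\eta)=X^{\top}$ (this is just the definition of the tangential projection, and is stated explicitly in the paper), the drift term in $\Delta_{-c\eta}$ becomes
\begin{equation*}
c\,\langle X,\nabla\eta\rangle\;=\;c\,\langle X,X^{\top}\rangle\;=\;c\,|X^{\top}|^{2}.
\end{equation*}

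Finally, I add the two pieces and use $|X|^{2}=|\partial_{t}|^{2}=1$:
\begin{equation*}
\Delta_{-c\eta}\eta\;=\;\Delta\eta\;+\;c\,\langle X,\nabla\eta\rangle\;=\;c\,|X^{\perp}|^{2}+c\,|X^{\top}|^{2}\;=\;c\,|X|^{2}\;=\;c.
\end{equation*}
There is essentially no obstacle here; the only point that requires a little care is the vanishing of $\bar{\operatorname{Hess}}\,t$, which is a direct consequence of the product structure (equivalently, the parallelism of $X$). Everything else is algebraic manipulation of the orthogonal splitting $X=X^{\top}+X^{\perp}$ together with the soliton equation.
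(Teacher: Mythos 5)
Your proof is correct and follows essentially the same route as the paper: both arguments reduce $\Delta\eta$ to $\langle \mathbf{H}, X^{\perp}\rangle = c|X^{\perp}|^{2}$ using the parallelism of $X=\partial_t$ (your trace formula with $\bar{\operatorname{Hess}}\,t\equiv 0$ is just a packaged form of the paper's direct computation $\Delta\eta=-\sum_i\langle\bar\nabla_{{\sf e}_i}X^{\perp},{\sf e}_i\rangle$), and then both add the drift term $c\langle X,\nabla\eta\rangle=c|X^{\top}|^{2}$ to conclude via $|X|^{2}=1$.
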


\noindent \emph{Proof.} Since $\bar\nabla t = X$  we have
\[
\nabla \eta = X - X^\perp 
\]
where $\perp$ denotes the projection onto the normal bundle of $\psi$. Then
\[
\Delta \eta =  - \sum_{i=1}^m \langle \bar\nabla_{{\sf e}_i} X^\perp, {\sf e}_i\rangle = \langle {\bf H}, X^\perp\rangle = c|X^\perp|^2
\]
and
\[
\Delta_{-c\eta} \eta = \Delta \eta + c\langle X, \nabla \eta\rangle = c|X^\perp|^2 + c (|X|^2-|X^\perp|^2) = c|X|^2 =c
\]
This finishes the proof. \hfill  $\square$

\vspace{3mm}

\subsection{A monotonicity formula for translating mean curvature flows.} \label{subsec:mono}

The translating soliton equation (\ref{solitonA-2}) appears naturally in the context of a monotonicity formula associated to the mean curvature flow in $\bar M = \mathbb{R}\times P$. 

\begin{proposition}
The function $K\circ\Psi$ where
\begin{equation}
K(t,\tau)  = \exp(ct-c^2\tau)
\end{equation}
is monotone non-increasing along the mean curvature flow. 
\end{proposition}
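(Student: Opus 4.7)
The assertion is a Huisken-type monotonicity formula: the weighted area
\[
F(\tau) = \int_M (K\circ\Psi)\,{\rm d}M_\tau
\]
is non-increasing along the mean curvature flow. I would prove this by computing $F'(\tau)$ and, after one integration by parts, collapsing the integrand into a manifestly non-positive perfect square against $\mathbf{H}-cX^\perp$.

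First I would differentiate under the integral. The area form evolves as $\partial_\tau({\rm d}M_\tau) = -|\mathbf{H}|^2\,{\rm d}M_\tau$, while the pointwise derivative of $K(t,\tau)=e^{ct-c^2\tau}$ along the flow is
\[
\frac{d}{d\tau}(K\circ\Psi) = K\bigl(c\,\partial_\tau\eta - c^2\bigr) = K\bigl(c\langle X,\mathbf{H}\rangle - c^2\bigr),
\]
using $\partial_\tau\eta = \langle \bar\nabla t,\partial_\tau\Psi\rangle = \langle X,\mathbf{H}\rangle$ for the (normal) MCF equation. Combining these,
\[
F'(\tau) = \int_M K\bigl(c\langle X,\mathbf{H}\rangle - c^2 - |\mathbf{H}|^2\bigr)\,{\rm d}M_\tau.
\]

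The key identity is $\Delta\eta = \langle X,\mathbf{H}\rangle$ on each slice $M_\tau$, which is a general consequence of $X=\partial_t$ being parallel in $\bar M$ (the same computation appears in the proof of the theorem just above, where the soliton hypothesis is only used in the last step). Since $\nabla(K\circ\Psi) = cKX^\top$, an integration by parts, assuming enough compactness or decay on $M_\tau$ for the boundary flux to vanish, yields
\[
\int_M cK\langle X,\mathbf{H}\rangle\,{\rm d}M_\tau = -c^2\int_M K\,|X^\top|^2\,{\rm d}M_\tau.
\]
Substituting back into $F'(\tau)$ and using $|X|^2=1$, a short algebraic rearrangement (amounting to one further use of the same identity) collapses the integrand to $-K\,|\mathbf{H}-cX^\perp|^2$, giving the clean Huisken-type formula
\[
F'(\tau) = -\int_M K\,|\mathbf{H}-cX^\perp|^2\,{\rm d}M_\tau \le 0,
\]
which is the stated monotonicity.

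The principal obstacle is the justification of the integration by parts when $M_\tau$ is non-compact. My plan there would be to regularize with compactly supported cut-offs on $M$, exhaust each slice, and verify that the boundary flux of $cKX^\top$ dies in the limit under a suitable growth control on $M_\tau$; the rapidly varying weight $K=e^{c\eta-c^2\tau}$ makes this delicate only in directions where $\eta$ grows to $+\infty$. The equality case $\mathbf{H}=cX^\perp$ is precisely the translating soliton equation, so this formula singles out translating solitons as the critical objects of the mean curvature flow in $\mathbb{R}\times P$, in exact parallel with the role played by self-shrinkers in Huisken's classical Euclidean monotonicity formula.
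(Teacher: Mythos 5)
Your argument is correct and is essentially the paper's own proof: both differentiate $F(\tau)=\int_M (K\circ\Psi)\,{\rm d}M_\tau$ along the flow, exploit that $X=\partial_t$ is parallel (so ${\rm tr}_{M_\tau}\bar\nabla^2 t=0$, equivalently $\Delta\eta=\langle X,\mathbf{H}\rangle$ on each slice), insert one vanishing divergence integral --- your integration by parts of $cK\Delta\eta$ is exactly the paper's device of adding the zero integral of $2\Delta_\tau K$, since $\Delta_\tau K = cK\langle X,\mathbf{H}_\tau\rangle + c^2K|\bar\nabla t^\top|^2$ --- and then complete the square to reach $F'(\tau)=-\int_M K\,|\mathbf{H}_\tau-cX^\perp|^2\,{\rm d}M_\tau\le 0$. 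The only differences are cosmetic: the paper keeps the weight $K(t,\tau)$ general and derives the conditions forcing the choice $K=\exp(ct-c^2\tau)$, whereas you verify that choice directly, and you are in fact more explicit than the paper about the compactness or decay hypotheses needed to make the divergence term vanish on non-compact slices.
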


\noindent \emph{Proof.} We consider a positive function $\varrho: (\omega_*, \omega^*)\times M \to \mathbb{R}$ of the form
\[
\varrho(\tau, x) = K(\tau, t(\Psi(\tau, x)))
\]
for some function $K: (\omega_*, \omega^*)\times I \to \mathbb{R}$ to be determined. Denoting $M_\tau = \Psi_\tau(M)$ we define
\begin{equation}
F(\tau) = \int_{M} \varrho (\tau, x)\, {\rm d}M_\tau
\end{equation}
where ${\rm d}M_\tau$ is the volume element induced in $M$ by the immersion $\Psi_\tau$.  Considering the mean curvature flow
\[
\frac{\partial\Psi}{\partial\tau} = {\bf H}_\tau = H_\tau N_\tau
\]
one obtains
\begin{eqnarray*}
\frac{{\rm d}F}{{\rm d}\tau} = \int_M \Big(\frac{\partial K}{\partial \tau} + \frac{\partial K}{\partial t} \langle \bar\nabla t, {\bf H}_\tau\rangle-K H_\tau^2 \Big){\rm d}M_\tau.
\end{eqnarray*}
However
\[
\Delta_\tau t|_{M_\tau} = {\rm tr}_{M_\tau}\bar\nabla^2 t +\langle \bar\nabla t, {\bf H}_\tau\rangle
\]
from what follows that
\begin{eqnarray*}
\frac{{\rm d}F}{{\rm d}\tau} = \int_M \Big(\frac{\partial K}{\partial \tau} + \frac{\partial K}{\partial t} \big(\Delta_\tau t -{\rm tr}_{M_\tau}\bar\nabla^2 t \big)-K H_\tau^2 \Big){\rm d}M_\tau.
\end{eqnarray*}
On the other hand
\[
\Delta_\tau K =\frac{\partial K}{\partial t} \Delta_\tau t + \frac{\partial^2 K}{\partial t^2} |\bar\nabla t^T|^2.
\]
Hence we have
\begin{eqnarray*}
\frac{{\rm d}F}{{\rm d}\tau} = \int_M \Big(\frac{\partial K}{\partial \tau} + \Delta_\tau K - \frac{\partial^2 K}{\partial t^2} |\bar\nabla t^T|^2 - \frac{\partial K}{\partial t} {\rm tr}_{M_\tau}\bar\nabla^2 t -K H_\tau^2 \Big){\rm d}M_\tau.
\end{eqnarray*}
Now we sum the (zero) integral of
\[
2\Delta_\tau K = 2\frac{\partial K}{\partial t} \big({\rm tr}_{M_\tau}\bar\nabla^2 t +\langle \bar\nabla t, {\bf H}_\tau\rangle \big) + 2\frac{\partial^2 K}{\partial t^2}|\bar\nabla t^T|^2  
\]
obtaining
\begin{eqnarray*}
\frac{{\rm d}F}{{\rm d}\tau} = \int_M \Big(\frac{\partial K}{\partial \tau} + \Delta_\tau K + \frac{\partial^2 K}{\partial t^2} |\bar\nabla t^T|^2 + \frac{\partial K}{\partial t} {\rm tr}_{M_\tau}\bar\nabla^2 t  + 2\frac{\partial K}{\partial t} \langle \bar\nabla t, {\bf H}_\tau\rangle-K H_\tau^2 \Big){\rm d}M_\tau.
\end{eqnarray*}
Since ${\rm tr}_M \bar\nabla^2 t =0$ we  conclude that
\begin{eqnarray*}
\frac{{\rm d}F}{{\rm d}\tau} = \int_M \Big(\frac{\partial K}{\partial \tau} + \Delta_\tau K + \frac{\partial^2 K}{\partial t^2} |\bar\nabla t^T|^2 + 2\frac{\partial K}{\partial t} \langle \bar\nabla t, {\bf H}_\tau\rangle-K H_\tau^2 \Big){\rm d}M_\tau.
\end{eqnarray*}
Rerranging terms we get
\begin{equation}
\frac{{\rm d}F}{{\rm d}\tau} = \int_M \Big(\frac{\partial K}{\partial \tau} +\frac{\partial^2 K}{\partial t^2} -\frac{\partial^2 K}{\partial t^2} \langle X, N_\tau\rangle^2+ 2\frac{\partial K}{\partial t} H_\tau \langle X, N_\tau\rangle-K H_\tau^2 \Big){\rm d}M_\tau.
\label{monotonicity-0}
\end{equation}
Supposing that 
\begin{equation}
\frac{\partial }{\partial t}\bigg(\frac{1}{K}\frac{\partial K}{\partial t}\bigg) =0
\end{equation}
one has
\begin{equation*}
\frac{{\rm d}F}{{\rm d}\tau} = \int_M \bigg(\frac{\partial K}{\partial \tau} +\frac{1}{K}\Big(\frac{\partial K}{\partial t}\Big)^2 -K\Big( \frac{1}{K}\frac{\partial K}{\partial t}\langle X, N_\tau\rangle- H_\tau\Big)^2 \bigg){\rm d}M_\tau.
\end{equation*}
Hence we set $K(\tau, t)$ as 
\[
K(\tau, t) =\exp(ct-c^2\tau).
\]
With this choice we conclude that
\begin{equation}
\label{monotonicity-4}
\frac{{\rm d}F}{{\rm d}\tau} =- \int_M K\big( H_\tau - c\langle X, N_\tau\rangle \big)^2{\rm d}M_\tau = - \int_M K\big|{\bf H}_\tau - c X^\perp\big|^2{\rm d}M_\tau, 
\end{equation}
what implies that $F$ is non-increasing along the mean curvature flow. \hfill $\square$

%
%

\subsection{Translating soliton equation}

Translating solitons may be locally described in non-parametric terms as graphs of solutions of a quasilinear PDE in divergence form. 

\begin{proposition}
Given a domain $\Omega \subset P$ and a $C^2$ function $u:\Omega \to \mathbb{R}$ the graph
\begin{equation}
M = \{(x, u(x): x\in \Omega\}\subset \bar M
\end{equation}
is a translating soliton if and only if $u$ satisfies the quasilinear partial differential equation
\begin{equation}
\label{PDE-soliton-1}
{\rm div}\bigg(\frac{\nabla u}{\sqrt{1+|\nabla u|^2}}\bigg)  = \frac{c}{\sqrt{1+|\nabla u|^2}}
\end{equation}
for some constant $c\in \mathbb{R}$.
\end{proposition}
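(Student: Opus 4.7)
The plan is to reduce the identity $H = c\langle X, N\rangle$ from Definition \ref{soliton-definition} to the claimed divergence-form PDE by a direct parametric computation. The fact that $X = \partial_t$ is parallel in the product $\bar M = \mathbb{R}\times P$ will trivialize essentially every curvature term that might otherwise appear, so the computation ends up looking almost identical to the one in Euclidean space.

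First I would parametrize the graph by $F:\Omega \to \bar M$, $F(x) = (u(x), x)$, and, given a local orthonormal frame $\{e_i\}$ on $P$, describe the tangent frame $\tilde e_i := F_*e_i = u_i \partial_t + e_i$ on $M$, where $u_i = e_i(u)$. A direct check gives the induced metric $g_{ij} = \delta_{ij} + u_i u_j$ and its inverse $g^{ij} = \delta^{ij} - u^i u^j/W^2$ with $W = \sqrt{1+|\nabla u|^2}$. The vector field $N := \frac{1}{W}(\partial_t - \nabla u)$ (with $\nabla u$ viewed as a horizontal lift) is orthogonal to each $\tilde e_i$ and has unit length, so it is a unit normal. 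In particular $\langle X, N\rangle = \langle \partial_t, N\rangle = 1/W$, which already produces the right-hand side of the target PDE.

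Next I would compute the mean curvature with respect to $N$. Using that $\partial_t$ is parallel and that the horizontal lifts of vector fields on $P$ are parallel along the $\mathbb{R}$-direction, one obtains
\[
\bar\nabla_{\tilde e_i}\tilde e_j = e_i(u_j)\,\partial_t + \nabla^P_{e_i} e_j,
\]
and consequently
\[
\langle \bar\nabla_{\tilde e_i}\tilde e_j, N\rangle = \frac{1}{W}\bigl(e_i(u_j) - (\nabla^P_{e_i}e_j)u\bigr) = \frac{1}{W}\,\mathrm{Hess}_P u(e_i,e_j).
\]
Contracting against $g^{ij}$ and simplifying (using $\nabla|\nabla u|^2 = 2\,\mathrm{Hess}\,u(\nabla u,\cdot)$) yields
\[
H = g^{ij}\langle \bar\nabla_{\tilde e_i}\tilde e_j,N\rangle = \frac{\Delta u}{W} - \frac{\mathrm{Hess}\,u(\nabla u,\nabla u)}{W^3} = \mathrm{div}\!\left(\frac{\nabla u}{W}\right),
\]
exactly as in the Euclidean setting. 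A conceptually cleaner alternative would be to invoke the first-variation computation of Proposition \ref{first-variation} with $c=0$, since a vertical perturbation $u\mapsto u+sv$ has $\partial_s F = v\,\partial_t$ and produces both $\delta A = \int \mathrm{div}(\nabla u/W)v\,dx$ and $\delta A = \int H v\,dx$, forcing the identification $H = \mathrm{div}(\nabla u/W)$.

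Inserting $H = \mathrm{div}(\nabla u/W)$ and $\langle X,N\rangle = 1/W$ into the scalar soliton equation (\ref{soliton-scalar}) gives (\ref{PDE-soliton-1}) directly, and since every step is reversible, the equivalence follows. The only step with any genuine content is the mean curvature formula, and the potential pitfall there is keeping track of how the product structure interacts with $\nabla^P$; the parallelism of $\partial_t$ is what makes all the cross-terms vanish, so no sectional curvature information on $P$ enters the formula.
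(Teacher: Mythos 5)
Your proof is correct and follows essentially the same route as the paper: parametrize the graph, identify the unit normal $N = \frac{1}{W}(\partial_t - \nabla u)$ so that $\langle X, N\rangle = 1/W$, compute $H = \mathrm{div}(\nabla u/W)$ via the second fundamental form, and substitute into the scalar soliton equation $H = c\langle X,N\rangle$. The only difference is cosmetic — you work in an orthonormal frame and spell out the connection computation $\bar\nabla_{\tilde e_i}\tilde e_j = e_i(u_j)\,\partial_t + \nabla^P_{e_i}e_j$ that the paper states implicitly when asserting the second fundamental form equals $\frac{1}{W}u_{i;j}$.
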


\noindent \emph{Proof.} Given a $C^2$ function $u:\Omega \to \mathbb{R}$ defined in an open subset $\Omega \subset P$ we parameterize its graph $M$ by
\[
\psi(x) = (u(x), x)\in \mathbb{R} \times \Omega.
\]
The induced metric has local components of the form
\[
g_{ij} = \sigma_{ij} + u_i u_j
\]
where
\[
\sigma_{ij} = g_0 (\partial_i, \partial_j)
\]
are the local coefficients of the metric $g_0$ in terms of local coordinates in $P$.  We fix the orientation of $M$ given by the unit normal vector field
\begin{equation}
\label{graph-normal}
N = \frac{1}{W} \big(X- \nabla u\big)
\end{equation}
where
\[
W = \sqrt{1+|\nabla u|^2}.
\]
 The second fundamental form of $M$ with respect to  $N$ has local components 
\begin{equation}
\langle \bar\nabla_{\psi_*\partial_i}\psi_*\partial_j, N\rangle = \frac{1}{W} u_{i;j}
\end{equation}
where $u_{i;j}$ are the components of the Hessian of $u$ in $P$. It follows that the mean curvature $H$ of $M$ is given by
\begin{equation}
H =\frac{1}{W}\bigg(\Delta u -\frac{1}{1+|\nabla u|^2} \langle \nabla_{\nabla u}\nabla u, \nabla u\rangle\bigg).
\end{equation}
This equation can be written in divergence form 
as follows
\begin{equation}
\label{PDE-H}
H = {\rm div}\bigg(\frac{\nabla u}{W}\bigg).
\end{equation}
On the other hand the scalar translating soliton equation (with constant $c$) is
\[
H = c\langle X, N\rangle = \frac{c}{W}
\]
from what follows that the translating soliton PDE is 
\begin{equation}
\label{PDE-1}
\Delta u -\frac{1}{1+|\nabla u|^2} \langle \nabla_{\nabla u}\nabla u, \nabla u\rangle = c.
\end{equation}
Therefore we conclude that (\ref{PDE-1}) can be written as
\begin{equation}
\label{PDE-div}
\sqrt{1+|\nabla u|^2}\,{\rm div} \bigg(\frac{\nabla u}{\sqrt{1+|\nabla u|^2}}\bigg) = c.
\end{equation}
This finishes the proof. \hfill $\square$

\vspace{3mm}

Integrating (\ref{PDE-soliton-1})  with respect to the Riemannian measure ${\rm d}P$ in $P$ we obtain
\begin{eqnarray*}
\int_\Omega \frac{c}{W}  \,{\rm d}P=\int_\Omega {\rm div} \Big(\frac{\nabla u}{W}\Big)\, {\rm d}P =\int_{\partial\Omega} \Big\langle \frac{\nabla u}{W}, \nu_\Omega\Big\rangle  {\rm d}\sigma,
\end{eqnarray*}
where ${\rm d}\sigma$ is the Riemannian measure in $\partial\Omega$ and $\nu_\Omega$ is the outward unit conormal along $\partial\Omega$. We then obtain the following expression that can be regarded as an analog of the flux formula in the case of constant mean curvature graphs:
\begin{equation}
\label{flux-solitons}
\int_\Omega \frac{c}{W} \,{\rm d}P +\int_{\partial\Omega} \Big\langle \frac{\nabla u}{W}, \nu_\Omega\Big\rangle\, {\rm d}\sigma=0.
\end{equation}


\section{Equivariant examples of translating solitons} \label{sec:equi}

From now on,  we suppose that  the Riemannian metric $g_0$ in $P$ has non-positive sectional curvatures and it is rotationally symmetric in the sense that it can be expressed as
\[
g_0 = {\rm d}r^2 + \xi^2(r)\, {\rm d}\vartheta^2,
\]
where ${\rm d}\vartheta^2$ stands for the metric in $\mathbb{S}^{n-1}\subset \mathbb{R}^n$ and $\xi$ is an even function on $\mathbb{R}$ with
\begin{equation}
\begin{cases}
& \xi(r)>0, \,\, \mbox{ on } \,\, r>0\\
& \xi'(0)=1,\\
& \xi^{(2k)}(0)=0, \quad k\in \mathbb{N}.
\end{cases}
\end{equation}
Then a radial function $u=u(r)$ is solution of (\ref{PDE-soliton-1}) if and only if it satisfies the ODE
\[
\sqrt{1+u'^2}\bigg(\frac{u'}{\sqrt{1+u'^2}}\bigg)'+ u' \Delta r = c,
\]
that is, 
\begin{equation}
\label{soliton-ode-0}
\frac{u''}{1+u'^2}  + (n-1)\frac{\xi'(r)}{\xi(r)} u' = c
\end{equation}
where $'$ denotes derivatives with respect to the radial coordinate $r$. Here we have used the fact that 
\[
\Delta r = (n-1)\frac{\xi'(r)}{\xi(r)}\cdot
\]
Note that in the case when $u=u(r)$ and $\Omega = B_{r}(o)$, the geodesic ball in $P$ centered at $o$ with radius $r>0$, we have $\nabla u = u'(r)\partial_r$ and  (\ref{flux-solitons}) reduces to 
\begin{equation}
\label{flux-graphs}
\frac{u'(r)}{\sqrt{1+u'^2(r)}} \xi^{n-1}(r) = \int_0^r H\xi^{n-1}(\tau)\, {\rm d}\tau.
\end{equation}
with
\[
H = H(r) =  \frac{c}{\sqrt{1+u'^2(r)}}\cdot
\]
Taking derivatives with respect to $r$ in both sides we get
\begin{eqnarray*}
\bigg(\frac{u''}{W} - \frac{1}{W^3} u'^2 u'' \bigg) \xi^{n-1} + (n-1)\frac{u'}{W}\xi^{n-1}\frac{\xi'}{\xi} = H(r)\xi^{n-1} = \frac{c}{W}\xi^{n-1},
\end{eqnarray*}
that is,
\[
\frac{u''}{1+u'^2}   + (n-1)u'\frac{\xi'}{\xi} =c
\]
Therefore we recover as expected equation (\ref{soliton-ode-0})  from the fact that (\ref{flux-graphs}) is a sort of first integral to the second order ODE (\ref{soliton-ode-0}).

\begin{theorem}
\label{thm-rev-surf}
Let $P$ be a $n$-dimensional complete Riemannian manifold endowed with a rotationally invariant metric $g_0$ whose sectional curvatures are non-positive. Then there exists a one-parameter family of rotationally symmetric translating solitons $M_\varepsilon$, $\varepsilon \in [0, +\infty),$ embedded into the Riemannian product $\bar M = \mathbb{R}\times P$. The translating soliton $M_0$ is an entire graph over $P$ whereas each $M_\varepsilon$, $\varepsilon>0$, is a bi-graph over the exterior of a geodesic ball in $P$ with radius $\varepsilon$.
\end{theorem}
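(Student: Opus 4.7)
The plan is to reduce the problem to a first-order ODE for the inclination angle of the generating profile curve and then establish global existence by an invariance-of-strips argument. Because equation (\ref{soliton-ode-0}) becomes singular precisely where $u'$ blows up (which is exactly what happens at the neck of the wing solitons $M_\varepsilon$), I would first introduce $\phi \in (-\pi/2, \pi/2)$ with $u' = \tan\phi$, so that (\ref{soliton-ode-0}) becomes
\[
\phi'(r) + (n-1)\frac{\xi'(r)}{\xi(r)}\tan\phi = c,
\]
and then switch to an arc-length parametrization $s$ of the profile curve $(r(s), t(s))$ to get the regular autonomous first-order system
\[
r_s = \cos\phi, \qquad t_s = \sin\phi, \qquad \phi_s = c\cos\phi - (n-1)\frac{\xi'(r)}{\xi(r)}\sin\phi.
\]
This arc-length system is smooth wherever $r > 0$ and does not see the vertical-tangent singularity, which is exactly what one needs to start a solution from $\phi = \pi/2$ at $r = \varepsilon$.

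To produce the bowl $M_0$, I would solve the original ODE on $[0,\infty)$ with $\phi(0)=0$, obtaining $\phi'(0) = c/n$ from a direct limit analysis of the $\sim (n-1)/r$ singularity of $\xi'/\xi$ together with the even extension of $u$ allowed by the smoothness of $\xi$. The key point is to rule out that $\phi$ reaches $\pi/2$ at finite radius: since $\xi$ is convex (from non-positive curvature), $\xi'/\xi > 0$ on $(0,\infty)$, so as $\phi \to \pi/2^-$ the term $-(n-1)(\xi'/\xi)\tan\phi$ dominates and forces $\phi' \to -\infty$. A standard extension argument then produces a global $C^2$ solution $u(r)$ on $[0,\infty)$, giving the entire radial graph $M_0$.

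For each $\varepsilon > 0$ I would produce $M_\varepsilon$ by solving the autonomous arc-length system with initial data $r(0) = \varepsilon$, $t(0)=0$, $\phi(0) = \pi/2$; local existence is immediate. The central step is a flow-invariance argument: the strip $\{0 < \phi < \pi/2\}$ is positively invariant because $\phi_s|_{\phi=0} = c > 0$ and $\phi_s|_{\phi=\pi/2} = -(n-1)\xi'(r)/\xi(r) < 0$ for every $r > 0$. Consequently $r_s = \cos\phi > 0$ for $s > 0$, so the profile is a strictly increasing graph of $t$ over $(\varepsilon,\infty)$; a short contradiction argument (if $r(s)$ stayed bounded then $\phi \to \pi/2$ and $\phi_s$ would remain uniformly negative) shows $r(s) \to \infty$ as $s \to \infty$. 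Since the system is invariant under the involution $(t,\phi)\mapsto(-t,\pi-\phi)$, reflecting the profile across $\{t=0\}$ produces a smooth bi-graph over $P\setminus B_\varepsilon(o)$, matched with horizontal tangent plane along the rotational axis at the neck. Embeddedness follows automatically from the strict monotonicity of the profile combined with rotational symmetry.

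I expect the main obstacle to be the verification that $\phi$ remains strictly inside $(0,\pi/2)$ on compact intervals of $s$, equivalently that the profile never turns back. This is exactly where non-positive curvature is used: via the Jacobi equation it gives $\xi'' \geq 0$ on $[0,\infty)$, hence $\xi'(r) \geq \xi'(0) = 1$, which keeps the barrier term $(n-1)\xi'(r)/\xi(r)$ positive and uniformly controlled on compact intervals in $r$. Without this, $\xi'$ could vanish or change sign, the $\phi=\pi/2$ barrier would fail, and the wing could close up, destroying both graphicality and embeddedness.
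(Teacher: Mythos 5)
Your reduction to the first-order system (\ref{soliton-ode-system}), the invariance of the angular strip, and the contradiction argument giving $r(s)\to\infty$ are all sound, and up to that point you follow essentially the same route as the paper (the paper obtains completeness by recognizing the profile curves as geodesics of the conformal metric $e^{2ct}\xi^{2(n-1)}(r)({\rm d}r^2+{\rm d}t^2)$ on the half-plane, whereas you argue directly with ODE barriers; both work). The genuine error is in the final step, where you produce the lower sheet of $M_\varepsilon$ by reflecting the upper sheet across $\{t=0\}$. The system (\ref{soliton-ode-system}) is \emph{not} invariant under $(t,\phi)\mapsto(-t,\pi-\phi)$: applied as stated, the equation $\dot r=\cos\phi$ turns into $\dot r=-\cos\tilde\phi$; and if you also reverse the parameter, $s\mapsto-s$, the first two equations are restored but the third becomes $\dot{\tilde\phi}=-c\cos\tilde\phi-(n-1)\frac{\xi'}{\xi}\sin\tilde\phi$, i.e.\ the $c$-term has the wrong sign. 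This failure is forced geometrically: reflection across a horizontal hyperplane conjugates a translator of velocity $c$ into a translator of velocity $-c$, so for $c\neq 0$ the wing cannot be mirror-symmetric (unlike the minimal case $c=0$). Your glued surface would satisfy the soliton equation on the upper sheet but the equation with $-c$ on the lower sheet, so it is not a translating soliton.

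The correct construction, which is what the paper does, is to take the two solutions of (\ref{soliton-ode-system}) with initial data $r(0)=\varepsilon$, $t(0)=0$ and $\phi(0)=+\frac{\pi}{2}$, respectively $\phi(0)=-\frac{\pi}{2}$; equivalently, continue the single solution through the vertical angle, which is a regular point of the arc-length system. Your strip argument applies verbatim to the branch with $\phi(0)=-\frac{\pi}{2}$: one has $\dot\phi=c\cos\phi-(n-1)\frac{\xi'}{\xi}\sin\phi>0$ for $\phi\in[-\frac{\pi}{2},0]$, so $\phi$ increases across $0$ and then the forward invariance of $(0,\frac{\pi}{2})$ takes over. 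Hence this branch first descends, attains its minimum height at the radius $r_0$ where $\phi=0$, and then turns upward with the same asymptotic slope as the upper branch; this asymmetric lower wing (visible in Figure \ref{fig:uno}, and quantified in the paper's estimate of $t(\varepsilon)-t(r_0)$) is what must be matched to the upper branch at the neck. Smoothness of the match is automatic, not something to be arranged by hand: both branches are arcs of one smooth solution curve, their parametrizations differing by the orientation reversal $s\mapsto-s$, $\phi\mapsto\phi\pm\pi$, which, unlike your reflection, \emph{is} a symmetry of the system.
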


\noindent \emph{Proof.} A rotationally symmetric hypersurface $M$ in $\bar M=\mathbb{R}\times P$ can be parameterized in terms of cylindrical coordinates as
\begin{equation}
\label{curve-param}
(s,\theta) \mapsto (r(s), t(s), \vartheta),
\end{equation}
where $s$ is the arc-lenght parameter of the profile curve in the orbit space defining $M$, that is, the intersection of $M$ and a geodesic half-plane $\Pi^+$ of the form $\vartheta = {\rm constant}$, $r\ge 0$.  Note that
\[
u' = \frac{\dot t}{\dot r}
\]
whenever $\dot r\neq 0$. Moreover if we assume that $\dot r \ge 0$ then
\[
\frac{u'}{W} = \frac{\dot t}{\sqrt{\dot r^2+\dot t^2}}
\]
Let $\phi$ be the angle between the radial coordinate vector field $\partial_r$ and the tangent line to the profile curve.
We claim that $M$ is a rotationally symmetric translating soliton if and only if the functions $(r,t,\phi)$ satisfy the first-order system
\begin{equation}
\label{soliton-ode-system}
\begin{cases}
& \dot r = \cos\phi\\
& \dot t = \sin\phi\\
& \dot \phi = c\cos\phi-(n-1)\frac{\xi'(r)}{\xi(r)} \sin\phi
\end{cases}
\end{equation}
The last equation in the system is deduced as follows
\begin{eqnarray*}
\bigg(\frac{u'}{W}\bigg)' =\ddot t \frac{1}{\dot r} = \cos\phi\, \dot\phi \frac{1}{\cos\phi} = \dot \phi.
\end{eqnarray*}
On the other hand since $W = 1/\dot r=1/\cos\phi$ and $u'/W=\dot t = \sin\phi$ we have
\[
\bigg(\frac{u'}{W}\bigg)' = \frac{c}{W} - \frac{u'}{W} \Delta r = \frac{c}{W} -(n-1)\frac{\xi'(r)}{\xi(r)} \frac{u'}{W} = c\cos\phi-(n-1)\frac{\xi'(r)}{\xi(r)} \sin\phi.
\]
The solutions of this system are complete since it can be regarded as the system of equations of geodesic curves in the Riemannian half-plane $\Pi^+=\{(r,t)\} = \mathbb{R}^+\times \mathbb{R}$ endowed with a metric  $\tilde g = \lambda^2(r,t)({\rm d}r^2+{\rm d}t^2)$ conformal to the Euclidean metric with
\[
\lambda(r,t) = e^{ct}\xi^{n-1}(r).
\]
Indeed  for rotationally symmetric hypersurfaces the weighted area functional (\ref{vol-Mw})  is given by
\begin{eqnarray*}
& & \mathcal{A}_{c\eta}[\Sigma] = \int_{\mathbb{S}^{n-1}}\int_{s_0}^s e^{ct(s)}\xi^{n-1}(r(s))\, {\rm d}s\, {\rm d}\vartheta\\
& & \,\, =|\mathbb{S}^{n-1}|\int_{s_0}^s \lambda(r(s), t(s))\, {\rm d}s\, {\rm d}\vartheta.
\end{eqnarray*}
On the other hand the lenght functional for the  profile  curve  $s\mapsto (r(s), t(s))\in \Pi^+$ computed with respect to the metric $\tilde g$ is given by
\[
L(\gamma) = \int_{s_0}^s \lambda(r(s), t(s))\, {\rm d}s
\]
since $\dot r^2(s) + \dot t^2(s)=1$. The Christoffel symbols for $\tilde g$ are
\[
\Gamma^r_{rr} = \Gamma_{rt}^t = -\Gamma_{tt}^r = \frac{\lambda_r}{\lambda}
\]
and
\[
\Gamma_{tt}^t = \Gamma_{tr}^r = -\Gamma_{rr}^t = \frac{\lambda_t}{\lambda}
\]
what implies that the geodesic system of equations is
\begin{equation*}
\begin{cases}
& \ddot r + \frac{\lambda_r}{\lambda}\dot r^2+ 2\frac{\lambda_t}{\lambda} \dot r \dot t -\frac{\lambda_r}{\lambda} \dot t^2=0\\
& \ddot t - \frac{\lambda_t}{\lambda}\dot r^2+ 2\frac{\lambda_r}{\lambda} \dot r \dot t +\frac{\lambda_t}{\lambda} \dot t^2=0
\end{cases}
\end{equation*}
However
\begin{eqnarray*}
\dot r \ddot t - \dot t \ddot r = (\cos^2\phi+\sin^2\phi)\dot \phi.
\end{eqnarray*}
Therefore
\[
\dot\phi = \frac{\lambda_t}{\lambda}\dot r (\dot r^2+\dot t^2)-\frac{\lambda_r}{\lambda}\dot t(\dot r^2+\dot t^2) = \frac{\lambda_t}{\lambda}\dot r -\frac{\lambda_r}{\lambda}\dot t.
\]
Since
\[
\frac{\lambda_t}{\lambda} = c, \quad \frac{\lambda_r}{\lambda}= (n-1)\frac{\xi'(r)}{\xi(r)}
\]
we conclude that
\[
\dot \phi = c \cos \phi - (n-1)\frac{\xi'(r)}{\xi(r)}\sin\phi.
\]
Therefore it is enough to prove that whenever a geodesic has a limit when $r\to 0^+$ the limit angle is $\phi\to 0$. Those curves can be reflected through the line $r=0$ and then are the geodesics in the complete plane $\Pi$ (defined by the reflection of $\Pi^+$ with respect to $r=0$) with initial conditions $r(0)=0, t(0)=t_0, \phi(0)=0$ and $\dot r(0)=1$, $\dot t(0)=0$ as limits. The other geodesics 
are confined in the half-plane $\Pi^+$. In this case, we have initial conditions of the form $r(0)=\varepsilon, t(0) = t_0, \phi(0) = \pm \frac{\pi}{2}$ with $\dot r(0) = 0$, $\dot t(0) = \pm 1$.

In sum,  rotationally symmetric discs, for instance, correspond to fix initial conditions  as $r(0)=0,\, t(0)= t_0,\, \phi(0)=0$.  Besides these graphs, we have for a fixed $c$ a one-parameter family of bi-graphs given by the solution of the system (\ref{soliton-ode-system}) with initial conditions $r(0)=\varepsilon, \, t(0)=0, \, \phi(0)=\pm \frac{\pi}{2}$. \hfill $\square$

\begin{remark}
\label{names1}
In analogy with the Euclidean case, we refer to the translating solitons $M_0$ and $M_\varepsilon$, $\varepsilon>0$, respectively as \emph{bowl} solitons and \emph{wing-like} solitons.
\end{remark}

\begin{proposition}
\label{u-asymp} Suppose that  there exist negative constants $K_-,$ and $K_+$ such that $K_-\le K\le K_+< 0$ and that $\big(\frac{\xi(r)}{\xi'(r)}\big)'\to 0$ as $r\to+\infty$. 
The rotationally symmetric translating solitons $M_\varepsilon$, $\varepsilon \in [0, +\infty),$ are described, outside a cylinder over a geodesic ball $B_R(o)\subset P$,  as graphs or bi-graphs of functions with the following asymptotic behavior
\begin{equation}
\label{deru-asymp}
u'(r) = \frac{c}{n-1}\frac{\xi(r)}{\xi'(r)} + o\bigg(\frac{\xi'(r)}{\xi(r)}\bigg)
\end{equation}
as $r\to +\infty$. 
\end{proposition}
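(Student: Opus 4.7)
The plan is to rewrite the soliton ODE \eqref{soliton-ode-0} as an inhomogeneous linear first-order ODE for the deviation $G(r) := u'(r) - \frac{c}{n-1}\xi(r)/\xi'(r)$, and then exploit its dissipative structure. Factoring the right-hand side of \eqref{soliton-ode-0} about the instantaneous equilibrium value $v_*(r) := \frac{c}{n-1}\xi(r)/\xi'(r)$ gives
\begin{equation*}
G'(r) + \mu(r)\,G(r) = -\tfrac{c}{n-1}\bigl(\xi/\xi'\bigr)'(r),
\end{equation*}
where $\mu(r) := (n-1)\frac{\xi'(r)}{\xi(r)}\bigl(1 + u'(r)^2\bigr)$. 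The forcing term vanishes at infinity by hypothesis, and the dissipation coefficient is bounded below: under $K \le K_+ < 0$, Hessian comparison applied to the radial Jacobi equation gives $\xi'(r)/\xi(r) \ge \sqrt{-K_+}$, so $\mu(r) \ge \lambda := (n-1)\sqrt{-K_+} > 0$.

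A preliminary step, which I expect to be the technical heart of the argument, is to establish a uniform a priori bound $|u'(r)| \le C$ for $r$ outside a compact set. For the wing-like $M_\varepsilon$ the profile starts tangent to a vertical line at $r = \varepsilon$, so $u'$ is formally infinite there, and a phase-plane analysis of \eqref{soliton-ode-system} is needed: the equilibrium angle $\phi_*(r)$ defined by $\tan\phi_*(r) = v_*(r)$ is attracting and uniformly bounded by $\arctan\bigl(|c|/((n-1)\sqrt{-K_+})\bigr) < \pi/2$, so combining the sign of $G$ with the slow variation of $v_*$ shows that once the curve has left the belly $r = \varepsilon$, $\phi$ is trapped in a compact subinterval of $(-\pi/2,\pi/2)$. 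Equivalently $u' = \tan\phi$ is bounded, and the graph (or bi-graph) description persists all the way to infinity.

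Once $u'$ is bounded, I solve the linear ODE by the integrating factor $e^{\int_R^r \mu}$, obtaining
\begin{equation*}
G(r) = G(R)\, e^{-\int_R^r \mu(\sigma)\, d\sigma} \;-\; \int_R^r \tfrac{c}{n-1}\bigl(\xi/\xi'\bigr)'(s)\, e^{-\int_s^r \mu(\sigma)\, d\sigma}\, ds.
\end{equation*}
The boundary term is dominated by $|G(R)|\,e^{-\lambda(r-R)}$ and decays exponentially. For the integral, given $\epsilon > 0$ I choose $R' \ge R$ with $|(\xi/\xi')'(s)| < \epsilon$ for $s \ge R'$, split the integration at $R'$, and bound the tail by a constant multiple of $\epsilon/\lambda$ while the prefix is dominated by $e^{-\lambda(r-R')}\,\|(\xi/\xi')'\|_{L^1([R,R'])}$, which vanishes as $r \to \infty$. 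Hence $\limsup_{r\to\infty}|G(r)| \le O(\epsilon)$, and since $\epsilon$ is arbitrary, $G(r) \to 0$. Because $\xi'/\xi$ is bounded below by the positive constant $\sqrt{-K_+}$, the statement $G(r) \to 0$ is exactly $G(r) = o\bigl(\xi'(r)/\xi(r)\bigr)$, which is the claim.
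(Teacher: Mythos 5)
Your proof is correct, and it takes a genuinely different route from the paper's. The paper argues in the style of Clutterbuck--Schn\"urer--Schulze \cite{CSS}: a maximum-principle argument on $W=\sqrt{1+u'^2}$ along the profile curve first yields $\dot\phi\ge 0$, hence the one-sided bound $u'\le \frac{c}{n-1}\frac{\xi}{\xi'}$; a subsolution $\zeta=(1-\epsilon)\frac{c}{n-1}\frac{\xi}{\xi'}$ is then checked against the ODE via the Riccati identity $\big(\frac{\xi}{\xi'}\big)'=1+K\big(\frac{\xi}{\xi'}\big)^2$, giving the two-sided pinching $(1-\epsilon)\frac{c}{n-1}\frac{\xi}{\xi'}\le u'\le\frac{c}{n-1}\frac{\xi}{\xi'}$; finally the deviations $\psi=u'-\frac{c}{n-1}\frac{\xi}{\xi'}$ and $\lambda=\frac{\xi}{\xi'}\psi$ are driven to zero by contradiction arguments. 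You instead notice that the substitution $G=u'-\frac{c}{n-1}\frac{\xi}{\xi'}$ makes the soliton ODE \eqref{soliton-ode-0} \emph{exactly} linear in $G$, namely $G'+\mu G=-\frac{c}{n-1}\big(\frac{\xi}{\xi'}\big)'$ with $\mu=(n-1)\frac{\xi'}{\xi}(1+u'^2)$, and that $\mu\ge(n-1)\sqrt{-K_+}>0$ by Hessian comparison; Duhamel's formula and the hypothesis $\big(\frac{\xi}{\xi'}\big)'\to 0$ then give $G\to 0$ in a few lines, and $G\to 0$ is equivalent to the claimed $o\big(\frac{\xi'}{\xi}\big)$ statement because $\frac{\xi}{\xi'}$ is bounded. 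Your route is shorter, needs no information on the sign of $G$ (the paper must first prove $\psi\le 0$), and yields a quantitative decay rate (exponential memory of the initial data plus the decay rate of $(\xi/\xi')'$) for free. What the paper's comparison scheme buys is robustness when the dissipation degenerates: if $K_+=0$, as in the Euclidean setting of \cite{CSS}, then $\mu$ is no longer bounded below by a positive constant, your exponential estimate collapses, and one really needs sub/supersolutions; the paper keeps that door open when it remarks that the case $K_+=0$ is handled as in \cite{CSS}.

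One further observation in your favor: the uniform bound $|u'|\le C$ that you flag as ``the technical heart'' is never actually used in your own argument. Since $1+u'^2\ge 1$, the dissipation bound $\mu\ge(n-1)\sqrt{-K_+}$ holds no matter how large $u'$ is; all your Duhamel computation requires is that the (bi-)graph description persists for all large $r$, i.e.\ that $\phi$ remains in $(-\pi/2,\pi/2)$ and $r\to+\infty$ along the profile curve. That qualitative fact is the content of Theorem \ref{thm-rev-surf} (and is what the paper's $W$-monotonicity step secures); your phase-plane trapping argument does prove it and is correct as sketched, but the quantitative bound on $u'$ it produces can simply be discarded.
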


\noindent \emph{Proof.} In what follows we keep using the notations fixed just above. Whenever $\dot r >0$ we have $W=1/\dot r$ and
\[
\dot W = \frac{1}{\cos^2\phi}\sin\phi\, \dot \phi.
\]
At a maximum point of $W$, that is, in a point where $\dot r>0$, either $\sin\phi =0$ or $\dot \phi=0$. Suppose that $\sin\phi=0$ and then $\cos\phi =1$. Since this happens at a maximum point of $W$ we have
\[
W \le 1,
\]
what implies that $\dot r\ge 1$ and therefore $\dot r\equiv 1$ whenever $\dot r>0$. If it is the case, then $\dot t \equiv 0$ whenever $\dot r>0$. This corresponds to a horizontal plane that is not a translating soliton. The other possibility is that $\dot\phi=0$ (and $\sin\phi\neq 0$) at a maximum point of $W$, say, $r=r_0$.  In this case
\[
c\cos\phi = (n-1)\frac{\xi'(r_0)}{\xi(r_0)}\sin\phi
\]
and since $\xi'>0$ and $c>0$
\[
\dot r^2 \ge \frac{\frac{(n-1)^2}{c^2}\frac{\xi'^2(r_0)}{\xi^2(r_0)}}{1+\frac{(n-1)^2}{c^2}\frac{\xi'^2(r_0)}{\xi^2(r_0)}}>0
\]
what contradicts the fact that either $\dot r\to 0$ at $r\to\varepsilon$ when the curve does not reach the rotation axis or $\dot r\searrow 1$ as $r\to 0$ in the case when the curve reaches the axis. In this case, the only possibility is that $r_0=0$ since $\xi'(r)/\xi(r)\to +\infty$ as $r\to 0$).

We conclude that $W$ has no interior maximum point in the region where $\dot r>0$.  On the other hand it is obvious that $W$ has a minimum point (at $r=R$, say) where $\dot r=1$. Hence, $W$ is non-decreasing in the region when $\dot r>0$ for $r>R$ what implies that
\[
\dot\phi = c\cos\phi - (n-1)\frac{\xi'(r)}{\xi(r)}\sin\phi\ge 0
\]
for $r\ge R$. 

Now, proceeding as in \cite{CSS} we denote $u'=\varphi$ and then  (\ref{soliton-ode-0}) becomes
\[
\varphi' = (1+\varphi^2)\bigg(c-(n-1)\frac{\xi'(r)}{\xi(r)} \varphi\bigg)=: F(r,\varphi(r)).
\]
Given $\epsilon>0$ denote
\[
\zeta(r) = (1-\epsilon) \frac{c}{n-1}\frac{\xi(r)}{\xi'(r)}\cdot
\]
We can prove that  for every given $\epsilon>0$ and $r_0>R$ there exists $r_1>r_0$ such that
\[
\zeta(r_1) \le \varphi(r_1).
\]
If it is not the case then there exist $\epsilon>0$ and $r_0>R$ such that 
\[
\varphi(r) < (1-\epsilon) \frac{c}{n-1} \frac{\xi(r)}{\xi'(r)}
\]
for every $r>r_0$. In this case we would have
\[
\varphi'(r) > c\epsilon(1+\varphi^2(r)), \quad r>r_0 
\]
what implies that
\[
c\epsilon (r-r_*) < \arctan \varphi(r)-\arctan\varphi(r_*), \quad r, r_* > r_0
\]
what contradicts the fact that the solution is complete and then $r\to +\infty$.

Moreover we can prove that
\[
\zeta'(r)\le F(r,\zeta(r)).
\]
for sufficiently large $r>R$. Indeed we have
\begin{eqnarray*}
& & F(r,\zeta(r)) =c\epsilon\bigg(1+ (1-\epsilon)^2 \frac{c^2}{(n-1)^2}\bigg(\frac{\xi(r)}{\xi'(r)}\bigg)^2\bigg)\ge (1-\epsilon) \frac{c}{n-1}\bigg(\frac{\xi(r)}{\xi'(r)}\bigg)'
\end{eqnarray*}
if and only if
\begin{equation}
\label{ineq-1}
(n-1)^2 + (1-\epsilon)^2 c^2 \bigg(\frac{\xi(r)}{\xi'(r)}\bigg)^2 \ge\bigg(\frac{1}{\epsilon}-1\bigg) (n-1)\bigg(\frac{\xi(r)}{\xi'(r)}\bigg)'
\end{equation}
Denoting by $K$ the radial sectional curvatures in $P$ along geodesics issuing from $o$ and  setting $g=\frac{\xi}{\xi'}$ it follows from Riccati's equation that 
\[
g' = 1+Kg^2.
\]
Hence the inequality (\ref{ineq-1}) above becomes
\[
(n-1)^2 + (1-\epsilon)^2 c^2 \bigg(\frac{\xi(r)}{\xi'(r)}\bigg)^2 \ge \bigg(\frac{1}{\epsilon}-1\bigg) (n-1)\bigg(1+K\bigg(\frac{\xi(r)}{\xi'(r)}\bigg)^2\bigg)
\]
or
\[
\bigg(\frac{\xi(r)}{\xi'(r)}\bigg)^2 \ge \frac{\big(\frac{1}{\epsilon}-1\big) (n-1)-(n-1)^2}{(1-\epsilon)^2 c^2-K\big(\frac{1}{\epsilon}-1\big) (n-1)}
\]
for a sufficiently large $r$. Here, we have need of the assumption that  $K\le 0$. For instance, in the hyperbolic space with constant sectional curvature $-K$ it holds that 
\[
\bigg(\frac{\xi(r)}{\xi'(r)}\bigg)^2 = \frac{1}{-K} \frac{\sinh^2 (\sqrt{-K} r)}{\cosh^2(\sqrt{-K}r)} <\frac{1}{-K}\cdot
\]
Then adjusting $r_1>r_0$ to be sufficiently large we conclude from a standard comparison argument for nonlinear ODEs that
\[
\zeta(r)\le \varphi(r)
\]
for every $r>r_0>R$ sufficiently large. We conclude that for every given $\epsilon$ and $r_0>R$
\begin{equation}
\label{ineq-phi}
(1-\epsilon)\frac{c}{n-1}\frac{\xi(r)}{\xi'(r)} \le \varphi(r) \le \frac{c}{n-1} \frac{\xi(r)}{\xi'(r)}
\end{equation}
for sufficiently large $r>r_0$. We set 
\begin{equation}
\varphi(r) = \frac{c}{n-1} \frac{\xi(r)}{\xi'(r)} +\psi(r).
\end{equation}
Note that
\begin{equation*}
\psi'(r) = -(n-1)\psi(r) \frac{\xi'(r)}{\xi(r)}\bigg(1+\bigg(\frac{c}{n-1}\frac{\xi(r)}{\xi'(r)}+\psi(r)\bigg)^2\bigg)-\frac{c}{n-1}\bigg(1+K\bigg(\frac{\xi(r)}{\xi'(r)}\bigg)^2\bigg)
\end{equation*}
It follows from (\ref{ineq-phi}) that $\psi(r) \le 0$ for sufficiently large $r>r_0$. We claim that 
\[
\lim_{r\to+\infty}\psi(r) = 0.
\]
Suppose that given an arbitrary $r_1 \gg 0$ there exists $\epsilon>0$ such that 
\[
\psi(r_*) \le -\epsilon
\]
for some $r_* >r_1$. Otherwise we are done, that is, we would have $\psi(r) \to 0^-$ as $r\to+\infty$. We have from (\ref{ineq-phi}) that
\[
-\psi(r) = |\psi(r)|\le \frac{c}{2(n-1)}\frac{\xi(r)}{\xi'(r)}
\]
for $r>r_0$ sufficiently large.  Since $K_-\le K\le K_+\le 0$ the Hessian comparison theorem \cite{prs} implies that 
 \[
\sqrt{-K_+}\coth (\sqrt{-K_+}r)= \frac{\xi'_+(r)}{\xi_+(r)}\le \frac{\xi'(r)}{\xi(r)} \le \frac{\xi'_-(r)}{\xi_-(r)} = \sqrt{-K_-}\coth (\sqrt{-K_-}r)
 \]
when  $K_+<0$ and 
\[
\frac{1}{r}= \frac{\xi'_+(r)}{\xi_+(r)}\le \frac{\xi'(r)}{\xi(r)} \le \frac{\xi'_-(r)}{\xi_-(r)} = \sqrt{-K_-}\coth (\sqrt{-K_-}r)
 \]
when $K_+=0$. Suppose that $K_+<0$ (the case $K_+=0$ can be handled with as in \cite{CSS}.) Hence
\begin{eqnarray*}
& & \psi'(r_*) \ge (n-1)\epsilon \frac{\xi'_+(r_*)}{\xi_+(r_*)}\bigg(1+\frac{c^2}{4(n-1)^2}\bigg(\frac{\xi_-(r_*)}{\xi'_-(r_*)}\bigg)^2\bigg)-\frac{c}{n-1}\bigg(1+K_+\bigg(\frac{\xi_-(r_*)}{\xi'_-(r_*)}\bigg)^2\bigg)\\
& &\,\,\ge (n-1)\epsilon \frac{\xi'_+(r_*)}{\xi_+(r_*)}\bigg(1-\frac{c^2(1-\delta)^2}{4(n-1)^2}\frac{1}{K_-}\bigg)-\frac{c}{n-1}\bigg(1-(1-\delta)^2\frac{K_+}{K_-} \bigg)\\
& &\,\,\ge (n-1)\epsilon \sqrt{-K_+}\bigg(1-\frac{c^2(1-\delta)^2}{4(n-1)^2}\frac{1}{K_-}\bigg)-\frac{c}{n-1}\bigg(1-(1-\delta)^2\frac{K_+}{K_-} \bigg)\\
& &\,\,\ge  (n-1)\epsilon \sqrt{-K_+}-\frac{c}{n-1}\epsilon^2 >  \frac{(n-1)}{2}\epsilon \sqrt{-K_+} =:\epsilon'
\end{eqnarray*}
 for sufficiently large $r_*>r_2>0$ and
 \[
 \delta = 1 - \sqrt{\frac{K_-}{K_+} (1-\epsilon^2)}
\]
with
\[
\epsilon < \frac{(n-1)^2}{2c}\sqrt{-K_+}.
\]
It follows that for all $r \gg r_*$ we have
\[
\psi(r) \ge \psi(r_*) + \frac{\epsilon'}{2} (r-r_*) \ge -\epsilon
\]
Since $\epsilon>0$ is arbitrary this proves our claim that $\psi(r) \to 0^-$ as $r\to+\infty$. Now we set
\begin{equation}
\lambda(r) = \frac{\xi(r)}{\xi'(r)}\psi(r).
\end{equation}
We claim that  $\lambda(r) \to 0$ as $r\to +\infty$ when $K_+<0$. Since $\psi(r)\to 0^-$ as $r\to+\infty$ we have for an arbitrarily fixed $\epsilon>0$ that
\begin{equation}
\label{mu}
0\le -\frac{\xi'(r)}{\xi(r)}\lambda(r)=-\psi(r)\le \epsilon 
\end{equation}
for sufficiently large $r>0$.  Since
\begin{eqnarray*}
& & \lambda'(r) = -(n-1)\frac{\xi'(r)}{\xi(r)}\lambda(r) \bigg(1+\bigg(\frac{c}{n-1}\frac{\xi(r)}{\xi'(r)}+\frac{\xi'(r)}{\xi(r)}\lambda(r)\bigg)^2\bigg)\\
& &\,\,\,\,+\bigg(1+K\bigg(\frac{\xi(r)}{\xi'(r)}\bigg)^2\bigg)\bigg(\frac{\xi'(r)}{\xi(r)}\lambda(r)-\frac{c}{n-1}\frac{\xi(r)}{\xi'(r)}\bigg).
\end{eqnarray*}
we have
\begin{eqnarray*}
& & \lambda'(r) =   -\frac{c}{n-1}\frac{\xi(r)}{\xi'(r)}\bigg(1+K\bigg(\frac{\xi(r)}{\xi'(r)}\bigg)^2+c\frac{\xi(r)}{\xi'(r)}\psi(r)+2(n-1)\psi^2(r)\bigg)      \\
& &\,\, -(n-1) \psi(r) + \bigg(1+K\bigg(\frac{\xi(r)}{\xi'(r)}\bigg)^2\bigg)\psi(r) - (n-1)\psi^3(r),
\end{eqnarray*}
from what follows that
\begin{eqnarray*}
& & \lambda'(r) \le    -\frac{c}{n-1}\frac{\xi(r)}{\xi'(r)}\bigg(1+K\bigg(\frac{\xi(r)}{\xi'(r)}\bigg)^2+c\lambda(r)+2(n-1)\psi^2(r)\bigg)      \\
& &\,\, +(n-2) \epsilon - K\bigg(\frac{\xi(r)}{\xi'(r)}\bigg)^2\epsilon + (n-1)\epsilon^3.
\end{eqnarray*}
Therefore
\begin{eqnarray*}
\lambda'(r) \le    -\frac{c}{n-1}\frac{\xi(r)}{\xi'(r)}\bigg(1+K\bigg(\frac{\xi(r)}{\xi'(r)}\bigg)^2+c\lambda(r)\bigg)  +(n-2) \epsilon + \frac{K_-}{K_+}\epsilon + (n-1)\epsilon^3.
\end{eqnarray*}
Suppose that $\lambda(r) \ge \delta$ for some $\delta>0$ and $r> r_3$ where $r_3$ is sufficiently large. Since by assumption $\big(\frac{\xi}{\xi'}\big)' = 1+K\frac{\xi^2}{\xi'^2}\to 0$ as $r\to +\infty$ one has
\[
\lambda'(r) \le  -\frac{c}{n-1}\frac{\xi(r)}{\xi'(r)}\frac{\delta}{2} +O(\epsilon) \le -C
\]
for some $C>0$. We conclude that $\lambda(r) < \delta$ for $r>r_3$ sufficiently large. Similarly if we assume that $\lambda(r) \le -\delta$ for sufficiently large $r>0$ we have 
\begin{eqnarray*}
& & \lambda'(r) \ge   -\frac{c}{n-1}\frac{\xi(r)}{\xi'(r)}\bigg(1+K\bigg(\frac{\xi(r)}{\xi'(r)}\bigg)^2-c\delta+2(n-1)\epsilon^2\bigg)      \\
& &\,\, +(n-2) \frac{\xi'(r)}{\xi(r)} \delta - K\bigg(\frac{\xi(r)}{\xi'(r)}\bigg)^2\frac{\xi'(r)}{\xi(r)} \delta + (n-1)\bigg(\frac{\xi'(r)}{\xi(r)}\bigg)^3\delta^3 \ge C
\end{eqnarray*}
for some $C>0$. Thus $\lambda(r) >\delta$ for $r>r_3$ sufficiently large. We conclude that $\lambda(r) \to 0$ as $r\to+\infty$ when $K_+<0$ as claimed. Hence
\begin{eqnarray*}
\varphi(r) = \frac{c}{n-1}\frac{\xi(r)}{\xi'(r)} + o\bigg(\frac{\xi'(r)}{\xi(r)}\bigg)
\end{eqnarray*}
as $r\to+\infty$ when $K_+<0$.  This finishes the proof \hfill $\square$

\begin{proposition} Suppose that $K<0$ and  $\big(\frac{\xi'(r)}{\xi(r)}\big)'\le 0$. Given a rotationally symmetric translating soliton $M_\varepsilon$, $\varepsilon>0$, we have
\begin{equation}
\lim_{\varepsilon\to 0} (t(\varepsilon)-t(r_0))=0,
\end{equation}
where $r_0>0$ is the radius for which $M_\varepsilon$ attains its minumum height, that is, where $\langle X, N\rangle =1$. Moreover
\begin{equation}
\lim_{\varepsilon\to +\infty} (t(\varepsilon)-t(r_0)) <+\infty.
\end{equation}
\end{proposition}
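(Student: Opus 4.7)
The plan is to parametrize the arc of the profile curve of the lower half of $M_\varepsilon$ that connects the neck $(r,\phi)=(\varepsilon,-\pi/2)$ to the lowest point $(r,\phi)=(r_0,0)$, using the system $\dot r=\cos\phi$, $\dot t=\sin\phi$, $\dot\phi=c\cos\phi-(n-1)(\xi'/\xi)\sin\phi$ from Theorem \ref{thm-rev-surf}. On this arc $\dot\phi=c\cos\phi+(n-1)(\xi'/\xi)(-\sin\phi)>0$, so $\phi$ serves as an independent variable, and changing variables in $t(\varepsilon)-t(r_0)=-\int_0^{s_1}\sin\phi\,ds$ produces
\[
t(\varepsilon)-t(r_0)=\int_{-\pi/2}^{0}\frac{-\sin\phi}{c\cos\phi+(n-1)\frac{\xi'(r(\phi))}{\xi(r(\phi))}(-\sin\phi)}\,d\phi.
\]

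Next I drop the non-negative summand $c\cos\phi$ from the denominator and invoke the hypothesis $(\xi'/\xi)'\le 0$, which is equivalent to $(\xi/\xi')'\ge 0$ and hence to $\xi/\xi'$ being non-decreasing in $r$. Since $r(\phi)\le r_0$ on the arc, this yields the central estimate
\[
t(\varepsilon)-t(r_0)\;\le\;\frac{1}{n-1}\int_{-\pi/2}^{0}\frac{\xi(r(\phi))}{\xi'(r(\phi))}\,d\phi\;\le\;\frac{\pi}{2(n-1)}\cdot\frac{\xi(r_0)}{\xi'(r_0)}.\qquad(\star)
\]

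For the first limit, since $\xi(0)/\xi'(0)=0$, it suffices to prove $r_0(\varepsilon)\to 0$ as $\varepsilon\to 0^+$. I compare $v_\varepsilon=\sin\phi$, regarded as a function of $r$ on $M_\varepsilon$, with the corresponding $v_0$ of the bowl $M_0$; both satisfy the first-integral form of the soliton equation, $(\xi^{n-1}v)'=c\sqrt{1-v^2}\,\xi^{n-1}$. A direct ODE analysis at $r=0$ (using $v_0(0)=0$ and $v_0'(0^+)=c/n>0$) shows $v_0>0$ on $(0,\infty)$, and a standard compactness-plus-uniqueness argument for the limiting integral equation yields $v_\varepsilon\to v_0$ locally uniformly on $(0,\infty)$ as $\varepsilon\to 0$. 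Hence for any fixed $\rho>0$ one has $v_\varepsilon(\rho)>0$ once $\varepsilon$ is small; since $v_\varepsilon$ vanishes at $r_0$ and is increasing on $[\varepsilon,r_0]$, this forces $r_0(\varepsilon)<\rho$, and letting $\rho\to 0^+$ gives $r_0\to 0$. The first limit then follows from $(\star)$.

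For the second claim, $r_0(\varepsilon)\ge\varepsilon\to+\infty$, so I must bound $\xi/\xi'$ as $r\to+\infty$. Interpreting the hypothesis $K<0$ in the sense used in the earlier propositions of this paper, namely that $K$ is bounded above by a negative constant, the Riccati identity $(\xi/\xi')'=1+K(\xi/\xi')^2$ together with the monotonicity $(\xi/\xi')'\ge 0$ forces $\xi/\xi'\le 1/\sqrt{-\sup K}$, so $(\star)$ yields the uniform bound $t(\varepsilon)-t(r_0)\le \pi/(2(n-1)\sqrt{-\sup K})<+\infty$. The main technical obstacle is justifying the convergence $v_\varepsilon\to v_0$ near $r=0$, since $\xi'/\xi$ blows up there and $v_\varepsilon$ starts at the value $-1$ where the nonlinearity $\sqrt{1-v^2}$ has infinite slope; this will require some care, for example by first restricting attention to the region where $|v|\le 1/2$ and then treating the initial ``boundary layer'' near $r=\varepsilon$ separately.
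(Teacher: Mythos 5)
Your central estimate $(\star)$ is correct and is, in substance, the paper's own key inequality: the paper integrates in $r$, writing $t'(r)=\frac{1}{n-1}\frac{\xi}{\xi'}\,(c-\phi'(r))$ and keeping the extra term $-c(r_0-\varepsilon)$, while you integrate in $\phi$; the monotonicity hypothesis $\big(\xi'/\xi\big)'\le 0$ enters both arguments in exactly the same way, and your weaker bound $t(\varepsilon)-t(r_0)\le \frac{\pi}{2(n-1)}\frac{\xi(r_0)}{\xi'(r_0)}$ suffices. Your treatment of the second limit is also correct, and in fact more explicit than the paper's: the paper deduces finiteness directly from its two-sided bound without saying why $\xi(r_0)/\xi'(r_0)$ stays bounded as $r_0\to\infty$, whereas your Riccati argument ($g'=1+Kg^2\ge 0$ forces $g\le 1/\sqrt{-\sup K}$) supplies the missing justification; reading the hypothesis $K<0$ as $\sup K<0$ is the right interpretation and is needed in either version.

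The genuine gap is in the first limit. Everything reduces to $r_0(\varepsilon)\to 0$, and you do not prove it: the locally uniform convergence $v_\varepsilon\to v_0$ is asserted as a ``standard compactness-plus-uniqueness argument,'' and you yourself flag its technical core (the singularity of $(\xi^{n-1}v)'=c\sqrt{1-v^2}\,\xi^{n-1}$ at $r=0$, where $\xi'/\xi$ blows up, and the degenerate value $v=-1$ at the neck) as unresolved; moreover uniqueness for that singular initial value problem, which is what identifies a subsequential limit with $v_0$, is itself a lemma that needs a proof (e.g.\ a Gronwall argument applied to $r^{\,n-1}(v_1-v_2)$). The paper is admittedly also brief here, invoking continuous dependence on initial conditions. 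But you can close the gap elementarily with tools already in your write-up. Integrating your first integral from $\varepsilon$ to $r_0$ and using $v_\varepsilon(\varepsilon)=-1$, $v_\varepsilon(r_0)=0$ gives
\begin{equation*}
\xi^{n-1}(\varepsilon)\;=\;c\int_\varepsilon^{r_0}\sqrt{1-v_\varepsilon^2}\;\xi^{n-1}\,{\rm d}r .
\end{equation*}
On the arc, $\frac{d\phi}{dr}=c-(n-1)\frac{\xi'}{\xi}\tan\phi\ge c$, so $\phi(r)\ge -\frac{\pi}{2}+c(r-\varepsilon)$. Hence, if $r_0\ge\rho$ (with $\rho$ small, $c\rho/4<\pi/2$) and $\varepsilon\le\rho/4$, then on $[\rho/2,\rho]$ one has $\sqrt{1-v_\varepsilon^2}=\cos\phi\ge\sin(c\rho/4)>0$, and since $\xi$ is non-decreasing this forces $\xi^{n-1}(\varepsilon)\ge \frac{c\rho}{2}\,\sin(c\rho/4)\,\xi^{n-1}(\rho/2)>0$, a constant independent of $\varepsilon$, contradicting $\xi^{n-1}(\varepsilon)\to 0$. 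This proves $r_0(\varepsilon)\to 0$ without any convergence of $v_\varepsilon$ to $v_0$, and with it your argument becomes complete.
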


\noindent \emph{Proof.}  Using  (\ref{curve-param}) and  given a small $\delta>0$ one obtains from  (\ref{flux-solitons})  applied to the region $\Omega =B_{r_0}(o)\backslash B_{\varepsilon+\delta}(o)$   that
 \begin{eqnarray*}
& & |\mathbb{S}^{n-1}|\int_{\varepsilon+\delta}^{r_0} \frac{c}{\sqrt{1+u'^2(r)}}\xi^{n-1}(r)\, {\rm d}r =\int_{\partial B_{r_0}(o)}\bigg\langle\frac{\nabla u}{W}, \partial_r\bigg\rangle -\int_{\partial B_{\varepsilon+\delta}(o)}\bigg\langle\frac{\nabla u}{W}, \partial_r\bigg\rangle\\
& &\,\, = |\mathbb{S}^{n-1}| \bigg(\frac{u'(r_0)}{\sqrt{1+u'^2(r_0)}} \xi^{n-1}(r_0)-\frac{u'(\varepsilon+\delta)}{\sqrt{1+u'^2(\varepsilon+\delta)}} \xi^{n-1}(\varepsilon+\delta)\bigg).
\end{eqnarray*}
In the parametric setting, if $r_0= r(s_0)$ and $\varepsilon = r(0)$ we get after passing to the limit as $\delta \to 0^+$
\begin{equation}
\dot t(s_0)\xi^{n-1}(r(s_0))-\dot t(0)\xi^{n-1}(\varepsilon) = \lim_{\delta\to 0^+}\int_{\varepsilon+\delta}^{r_0} \frac{c}{\sqrt{1+u'^2(r)}}\xi^{n-1}(r)\, {\rm d}r. 
\end{equation}
In particular, 
fix $r_0=r(s_0)$ such that $\dot t(s_0)=0$ and $\dot r (s_0)=1$ with $\phi(s_0)=0$. Since $r(0)= \varepsilon$ then $\dot r(0) =0$ and $\dot t(0) =-1$ (with the choice $\phi(0)=-\frac{\pi}{2}$) what implies that
\begin{equation}
\xi^{n-1}(\varepsilon) = \int_{\varepsilon}^{r_0} \frac{c}{\sqrt{1+u'^2(r)}}\xi^{n-1}(r)\, {\rm d}\tau. 
\end{equation}
\begin{figure}[htbp]
\begin{center}
\includegraphics[height=.39\textheight]{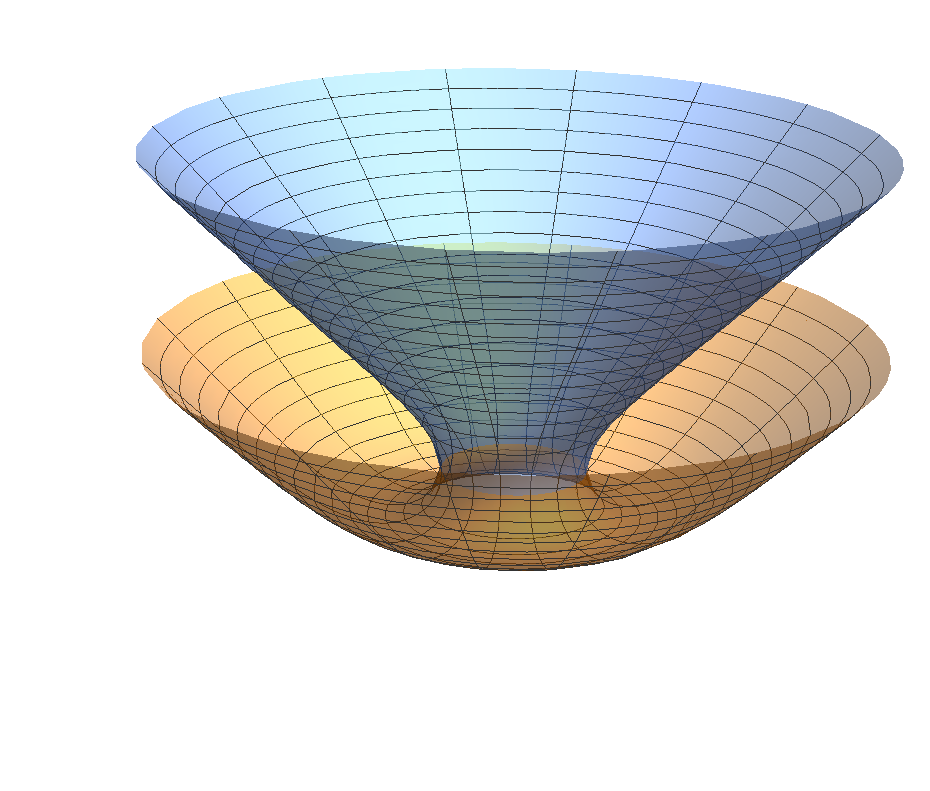} 
\caption{A wing-like solution. It is the union of two graphs over the exterior of a disk (blue and yellow.)}
\label{fig:uno}
\end{center}
\end{figure}

We also have in the region between $r=\varepsilon+\delta$ and $r=r_0$ that
\[
\phi'(r) = \frac{\dot\phi}{\dot r} = \frac{1}{\cos\phi} \dot \phi = c - (n-1)\frac{\xi(r)}{\xi(r)}\tan \phi = c - (n-1)\frac{\xi(r)}{\xi(r)} t'(r).
\]
where $t'(r) = u'(r)$. Therefore 
\begin{equation}
t'(r) = \frac{1}{n-1}\frac{\xi(r)}{\xi'(r)} (c-\phi'(r))
\end{equation}
from what follows since $\dot t\le 0$ in that range that
\[
t(\varepsilon)-t(r_0) = \frac{1}{n-1}\int_{\varepsilon}^{r_0} \frac{\xi(\tau)}{\xi'(\tau)} (\phi'(\tau)-c)\,{\rm d}\tau.
\]
Therefore since $r\mapsto \frac{\xi(r)}{\xi'(r)}$ is non-decreasing 
\begin{eqnarray*}
& & \frac{1}{n-1} \frac{\xi(\varepsilon)}{\xi'(\varepsilon)}  \bigg(\frac{\pi}{2}-c(r_0-\varepsilon)\bigg)=\frac{1}{n-1} \frac{\xi(\varepsilon)}{\xi'(\varepsilon)}  (\phi(r_0)-\phi(\varepsilon)-c(r_0-\varepsilon))\le t(\varepsilon)-t(r_0) \\
& & \,\, \le \frac{1}{n-1} \frac{\xi(r_0)}{\xi'(r_0)}  (\phi(r_0)-\phi(\varepsilon)-c(r_0-\varepsilon))  = \frac{1}{n-1} \frac{\xi(r_0)}{\xi'(r_0)}  \bigg(\frac{\pi}{2}-c(r_0-\varepsilon)\bigg)
\end{eqnarray*}
We conclude that
\begin{equation}
\label{limits}
\frac{1}{n-1} \frac{\xi(\varepsilon)}{\xi'(\varepsilon)}  \bigg(\frac{\pi}{2}-c(r_0-\varepsilon)\bigg) \le t(\varepsilon)-t(r_0) \le  \frac{1}{n-1} \frac{\xi(r_0)}{\xi'(r_0)} \bigg(\frac{\pi}{2}-c(r_0-\varepsilon)\bigg).
\end{equation}
Since $t(\varepsilon) \ge t(r_0)$ this yields
\begin{equation}
\label{rbound}
r_0-\varepsilon \le \frac{\pi}{2}\frac{1}{c}
\end{equation}
and since $r_0\to 0$ as $\varepsilon \to 0$ we have
\begin{equation}
\frac{1}{n-1} \lim_{\varepsilon\to 0}\frac{\xi(\varepsilon)}{\xi'(\varepsilon)}  \frac{\pi}{2} \le \lim_{\varepsilon\to 0} \big(t(\varepsilon)-t(r_0)\big) \le  \frac{1}{n-1} \lim_{r_0\to 0} \frac{\xi(r_0)}{\xi'(r_0)} \frac{\pi}{2}.
\end{equation}
Since 
\[
\lim_{r\to 0} \frac{\xi(r)}{\xi'(r)} = 0
\]
we conclude that 
\[
\lim_{\varepsilon\to 0} \big(t(\varepsilon)-t(r_0)\big) = 0
\]
what is expected from the continuous dependence of the ODE system (\ref{soliton-ode-system}) with respect to the initial conditions.  It also follows from (\ref{limits}) and (\ref{rbound}) that
\[
\lim_{\varepsilon \to+\infty} \big(t(\varepsilon)-t(r_0)\big) <+\infty
\]
This concludes the proof.
\hfill $\square$

As we will see in the last section, there are several interesting applications of the examples constyructed in the previous theorem as barriers for the maximum principle application. Perhaps the simplest one is the following:

\begin{proposition}
In the hypotheses of Theorem \ref{thm-rev-surf}, there are no complete translating solitons in $\R \times P$ contained in a region of the form $\{(s,p) \in \R \times P \; : \; s \leq s_0\},$ for a given $s_0 \in \R$.
\end{proposition}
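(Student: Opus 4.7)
\emph{Proof plan.} The idea is to use the bowl soliton $M_0$ from Theorem \ref{thm-rev-surf} as an upper barrier and invoke the tangency principle for the translating soliton PDE (\ref{PDE-soliton-1}). We may assume $c>0$ (otherwise apply the isometry $t\mapsto -t$), and we interpret \emph{complete} as \emph{properly immersed without boundary}, in line with the properness hypothesis stated in the Introduction; this is used only in the sliding step. The first preliminary task is to verify that $M_0$ is genuinely unbounded upward: from the ODE system (\ref{soliton-ode-system}), the profile $s\mapsto(r(s),t(s))$ of $M_0$ starts at $r(0)=0$, $\phi(0)=0$ with $\dot\phi(0)=c>0$, so $\phi$ immediately enters $(0,\pi/2)$ and $\dot t=\sin\phi>0$, forcing $r(s),t(s)\to+\infty$. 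Thus $M_0$ is the graph of a function $u_0:P\to\mathbb R$ with $u_0(o)=:h_0=\min_P u_0$ and $u_0(p)\to+\infty$ as $d(p,o)\to+\infty$, and every vertical translate $M_0^a:=M_0+a\,\partial_t$ is again a translating soliton with the same constant $c$.

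Suppose for contradiction that $M\subset\{s\le s_0\}$ is a complete translating soliton. Then $M_0^a\cap M=\emptyset$ whenever $a>s_0-h_0$ (since in that range $M_0^a\subset\{s>s_0\}$), while every point $(s_q,p_q)\in M$ satisfies $(s_q,p_q)\in M_0^{s_q-u_0(p_q)}$, so the set of $a$'s for which $M_0^a$ meets $M$ is nonempty. Hence
\[
a_*:=\inf\{a\in\mathbb R:M_0^a\cap M=\emptyset\}
\]
is a finite real number. Properness of $M$ together with the closedness of $M_0^a$ in $\bar M$ guarantees that this infimum is realized at a point $q_*\in M\cap M_0^{a_*}$, and by definition of $a_*$ the hypersurface $M$ lies locally beneath $M_0^{a_*}$ at $q_*$. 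Writing both surfaces as graphs over a neighborhood of the projection of $q_*$ to $P$, they are tangent at $q_*$ with $M$ on the lower side, and both satisfy the same quasilinear elliptic equation (\ref{PDE-soliton-1}) with the same constant $c$. The strong maximum principle applied to the difference of the two graph functions, followed by unique continuation and connectedness of $M$, forces $M=M_0^{a_*}$, which contradicts $M\subset\{s\le s_0\}$ since $M_0^{a_*}$ is unbounded above.

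The substantive point is the sliding step: showing that $a_*$ is actually attained at a finite point of $M$, rather than only asymptotically. This is precisely what the properness of $M$ delivers, and it is the reason the Introduction states the related non-existence theorem in the ``proper'' category. Without properness the family $\{M_0^a\}$ could slip past $M$ at infinity without ever touching, and one would need a substitute: either the monotonicity formula of Subsection \ref{subsec:mono} or an Omori--Yau-type principle for the drift operator $\Delta_{-c\eta}$ (the identity $\Delta_{-c\eta}\eta=c>0$ proved just above Subsection \ref{subsec:mono} is already incompatible with $\eta$ attaining a maximum on a closed hypersurface, and a maximum-principle-at-infinity would extend this to $\eta$ merely bounded above). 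For the present statement, however, properness plus the tangency principle suffices.
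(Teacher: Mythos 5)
Your proof is correct and takes essentially the same route as the paper's: slide the vertical translates of the bowl soliton $M_0$ until a first contact with $M$, observe that the contact cannot occur at infinity (the bowl grows without bound while $M$ stays below $s_0$), and apply the tangency/strong maximum principle to force $M$ to coincide with a translate of $M_0$, contradicting the unboundedness of the bowl. One small repair: the finiteness of $a_*=\inf\{a\in\mathbb{R}: M_0^a\cap M=\emptyset\}$ does not follow from the two nonemptiness facts you cite (a priori the set of contact parameters could be bounded below, making this infimum $-\infty$); it is cleaner to set $a_*=\sup\{a: M_0^a\cap M\neq\emptyset\}$, which is finite because that set is nonempty and bounded above by $s_0-h_0$, after which your properness and tangency arguments go through verbatim.
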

\begin{proof}
We proceed by contradiction. Assume that there exists such a soliton $M$ in $\R \times P$. We consider the bowl soliton $M_0$. We set $\Phi_s:\mathbb{R}\times P  \to \mathbb{R}\times P$ to denote the
translation
\[
\Phi_s(t,x) = (s+t, x), \quad \mathbb{R}\times P, \,\, s\in \mathbb{R}
\]
As the translations $\{\Phi_s(M_0), \; s \in \R\}$ foliate the whole manifold $\R \times P$, then there exists a first contact between the soliton $M$ and $\Phi_{s_1}(M_0)$, for a suitable $s_1\in\R.$ Taking into account the hypothesis about $M$ and the asymptotic behaviour of $M_0$, the above contact cannot occur at infinity. This means that booth solitons have an interior point of contact and by the maximum principle they must coincide. However, this is impossible the Euclidean coordinate of a bowl soliton is never bounded from above. This contradiction proves the proposition.
\end{proof}

\subsection{Translating solitons with ideal points} \label{subsec:grim}

In this section, we assume that $K<0$. Let $r$ be the distance to a fixed level of a Busemann function in $P$, that is, the distance from a fixed horosphere $\Pi\subset P$ with an ideal point $o$ in the asymptotic boundary $\partial_\infty P$. Then we consider the following coordinate expression of the metric in $P$
\[
{\rm d}r^2 + \xi^2(r) \,{\rm d}\vartheta^2,
\] 
where this time ${\rm d}\vartheta^2$ denotes the Riemannian induced metric in $\Pi$. We then consider translating solitons given as graphs of functions of the form
\[
u = u(r).
\]
Since in this case $\nabla u = u'(r)\nabla r$ and $|\nabla r|=1$ it follows that (\ref{PDE-div}) reduces to 
\begin{equation}
\label{ode-horosphere}
\frac{u''(r)}{1+u'^2(r)} +  u'(r) \Delta r = c.
\end{equation}
In this setting, $\Delta r$ is  the mean curvature of a horosphere, that is, 
\[
\Delta r = (n-1)\frac{\xi'(r)}{\xi(r)}\cdot
\]
We conclude that translating solitons foliated by horospheres are given by solutions of the equation
\begin{equation}
u''(r) =\bigg(c-(n-1)\frac{\xi'(r)}{\xi(r)}\bigg) (1+u'^2(r)).
\end{equation}
We then obtain the following analog of Theorem \ref{thm-rev-surf}

\begin{theorem}
\label{thm-rev-surf-ideal}
Let $P$ be a $n$-dimensional complete Riemannian manifold endowed with a rotationally invariant metric $g_0$ whose sectional curvatures are negative. There exists a one-parameter family of translating solitons $M^\infty_\varepsilon$, $\varepsilon \in [0, +\infty),$ embedded in $\bar M = \mathbb{R}\times P$ and foliated by horospheres in parallel hyperplanes $P_t$. The ideal points in each $M_\varepsilon^\infty$ lie in an asymptotic line of the form $\mathbb{R}\times \{x_\infty\}$ with $x_\infty \in \partial_\infty P$. 
\end{theorem}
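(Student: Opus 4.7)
The proof proceeds in parallel to Theorem \ref{thm-rev-surf}, using horospherical symmetry in place of rotational symmetry. Fix an ideal point $x_\infty\in\partial_\infty P$ and write the metric of $P$ in horospherical coordinates as $g_0 = {\rm d}r^2 + \xi^2(r)\,{\rm d}\vartheta^2$, where $r$ is the signed distance to a reference horosphere $\Pi_0$ centered at $x_\infty$. A hypersurface of $\mathbb{R}\times P$ whose slices at each height $t$ are horospheres centered at $x_\infty$ is parameterized by
\begin{equation*}
(s,\vartheta)\longmapsto (t(s), r(s), \vartheta), \qquad s\in\mathbb{R},\ \vartheta\in\Pi_0,
\end{equation*}
where $s\mapsto (r(s), t(s))$ is a unit-speed profile curve in the plane $\Pi = \mathbb{R}^2$.

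Since the mean curvature of the horosphere $\{r=\text{const}\}$ is $(n-1)\xi'(r)/\xi(r)$, the derivation that produced \eqref{soliton-ode-system} applies verbatim. Letting $\phi$ be the angle between the tangent of the profile curve and $\partial_r$, I obtain the first-order system
\begin{equation*}
\dot r = \cos\phi, \qquad \dot t = \sin\phi, \qquad \dot\phi = c\cos\phi - (n-1)\frac{\xi'(r)}{\xi(r)}\sin\phi.
\end{equation*}
As in the rotationally symmetric case, this is the geodesic system for the conformal metric $\tilde g = e^{2ct}\xi^{2(n-1)}(r)({\rm d}r^2+{\rm d}t^2)$, which is complete and implies long-time existence of the trajectories. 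The key difference with respect to Theorem \ref{thm-rev-surf} is that the profile curve now lives in the \emph{entire} plane $\Pi = \mathbb{R}^2$ (since $r$ is a signed distance with no singular axis), giving more flexibility in the choice of initial data.

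I produce the one-parameter family $\{M_\varepsilon^\infty\}_{\varepsilon\ge 0}$ by varying the initial conditions of the system, e.g.\ $(r(0), t(0), \phi(0)) = (\varepsilon, 0, -\pi/2)$, obtaining solitons of different ``amplitudes'' in the $r$-direction. Embeddedness follows from the monotonicity of $W = 1/\cos\phi$ as in Proposition \ref{u-asymp}: $W$ has no interior maximum in the region where $\dot r > 0$, which precludes self-intersection of the profile curve and thus of $M_\varepsilon^\infty$.

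The main obstacle is the asymptotic analysis identifying the ideal line $\mathbb{R}\times\{x_\infty\}$. Using $K<0$, Riccati's equation gives a uniform positive lower bound on $\xi'(r)/\xi(r)$ at infinity, and the estimates of Proposition \ref{u-asymp} adapt to show that $r(s)\to +\infty$ as $s\to\pm\infty$ (in the bi-graph case) with $t(s)$ also tending to infinity in a controlled way. Because all horospheres centered at $x_\infty$ accumulate at the single ideal point $x_\infty$ as $r\to+\infty$, the ideal boundary of each $M_\varepsilon^\infty$ is precisely the asymptotic line $\mathbb{R}\times\{x_\infty\}$, completing the proof.
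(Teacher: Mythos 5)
Your proposal is correct and follows essentially the same route as the paper: the paper likewise writes $g_0$ in Busemann (horospherical) coordinates, reduces the soliton condition for horosphere-foliated hypersurfaces to the radial ODE \eqref{ode-horosphere}, and then invokes, essentially verbatim, the profile-curve/geodesic-system analysis from the proof of Theorem \ref{thm-rev-surf} --- which is exactly what you spell out (initial data, completeness of the flow, embeddedness, and the identification of the ideal set with $\mathbb{R}\times\{x_\infty\}$ coming from the fact that each horosphere centered at $x_\infty$ has $x_\infty$ as its only ideal point). Incidentally, your first-order system is the correct transcription of \eqref{ode-horosphere} (the paper's displayed second-order equation drops the factor $u'$ multiplying $(n-1)\tfrac{\xi'}{\xi}$, an evident typo), so your write-up is, if anything, more detailed than the paper's own two-line argument.
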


\begin{remark}
\label{names2}
We refer to the translating solitons $M_0^\infty$ and $M_\varepsilon^\infty$, $\varepsilon>0$, respectively as \emph{ideal} bowl solitons and \emph{ideal}  wing-like solitons.
\end{remark}

\subsection{Entire grim reaper graphs}

Suppose now that the metric in $P$ may be written  in the form
\begin{equation}
\label{warped-P}
\xi^2(r) \,{\rm d}\tau^2+{\rm d}r^2+ \chi^2(r)\,{\rm d}\vartheta^2
\end{equation}
for $r\in\mathbb{R}$, $\tau \in  \mathbb{R}$ and $\vartheta\in \mathbb{S}^{n-2}$. This metric is regular if $\xi$ satisfies the conditions
\begin{equation}
\begin{cases}
& \xi(0)=1,\\
& \xi^{(2k+1)}(0)=0,\quad k\in\mathbb{N}\\
& \xi(r)>0, \,\, \mbox{ on }\,\, r>0.
\end{cases}
\end{equation}
In this setting, the coordinate vector field $\partial_\tau$ is Killing and the coordinate lines $\tau = {\rm cte.}$ are geodesics.  For $n=2$ the coordinate lines $r = {\rm cte.}$ are equidistant to the geodesic $\varrho=0$.

From now on we suppose that $u= u(r)$. This means that we are searching for special solutions $u$ that depend on only one parameter, mimicking the case of grim reaper cylinders in the Euclidean space. Under this assumption, the translating soliton equation (\ref{PDE-div}) becomes
\[
{\rm div}\bigg(\frac{u'}{\sqrt{1+u'^2}}\nabla r\bigg) = \frac{c}{\sqrt{1+u'^2}}\cdot
\]
Expanding the left-hand side one gets
\begin{eqnarray*}
{\rm div}\bigg(\frac{u'}{\sqrt{1+u'^2}}\nabla r\bigg) =\bigg(\frac{u'}{\sqrt{1+u'^2}}\bigg)' + \frac{u'}{\sqrt{1+u'^2}}
\Delta r
\end{eqnarray*}
In geometric terms, $\Delta r$ is the mean curvature $h$ of the equidistant level sets $r= {\rm cte}$. Note that
\[
(n-1) h = \Delta r= \frac{\xi_r}{\xi}+(n-2)\frac{\chi_r}{\chi}
\]
For $n=2$ we have in particular
\[
h = \frac{\xi_r}{\xi}\cdot
\]
For instance if $\bar M =\mathbb{R} \times \mathbb{H}^2$ with $\xi(r) = \cosh r$  we have 
\[
h =\frac{\xi'(r)}{\xi(r)} = \tanh r.
\]
We conclude that the ODE for translating solitons foliated by equidistant lines is 
\begin{equation}
\label{ode-grim}
u'' = (1+u'^2)(c - (n-1)h(r) u').
\end{equation}
For $P=\mathbb{H}^n$ we have
\begin{equation}
\label{ode-grim-hypn-bis}
u''(r)= (1+u'^2(r))(c - (\tanh r+(n-2)\coth r) u'(r)).
\end{equation}
From (\ref{ode-grim}) it is trivial that $u'=:\varphi$ cannot diverge at a point $x_0\in P$. Indeed, if $\varphi\to \pm\infty$ as $r\to r(x_0)$ then $\varphi'\to \mp \infty$ as  $r\to r(x_0)$, what leads to a contradiction. This fact implies that, given initial values $u(r_0)=t_0$ and $u'(r_0)=\varphi_0$, there exists a unique solution of \eqref{ode-grim} defined for every $r\in\R$. In this way, we have the following existence result

\begin{theorem}
\label{thm-grim}
Let $P$ be a $n$-dimensional complete Riemannian manifold endowed with a metric $g_0$ of the form {\rm(\ref{warped-P})} whose sectional curvatures are negative. Suppose that  $h>0$ and $\lim_{r\to+\infty} h(r)<+\infty$. Then for each $c\in \mathbb{R}$, there exists a translating soliton $M^\infty$ which is an entire graph over $P$ and it is foliated by equidistant lines in parallel hyperplanes $P_t$.
\end{theorem}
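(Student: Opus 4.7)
The plan is to convert the geometric statement into a global existence problem for the second-order ODE \eqref{ode-grim}, use a simple barrier argument to rule out blow-up of $\varphi := u'$ in finite $r$-time, and then read off the foliation structure directly from the ansatz $u = u(r)$.

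First I would recall from the derivation immediately preceding the theorem that if $u = u(r)$ is any $C^2$ solution of
\[
u'' = (1+u'^2)\bigl(c - (n-1)h(r)u'\bigr)
\]
on an $r$-interval, then the graph $M^\infty = \{(u(r(p)), p) : p \in P\}$ is a translating soliton with constant $c$. Fix $r_0$ and initial data $u(r_0) = 0$, $u'(r_0) = 0$. Writing the equation as a first-order system in $(u,\varphi)$ with $\varphi = u'$, Picard--Lindel\"of produces a unique smooth local solution on some maximal interval $(r_-, r_+)$. The only thing to do is to show $(r_-, r_+) = \mathbb{R}$.

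Next I would give an a priori bound on $\varphi$ that upgrades the remark made in the paragraph preceding the theorem into a genuine barrier argument. Assume, for contradiction, that $r_+ < +\infty$. On the compact interval $[r_0, r_+]$ the continuous positive function $h$ satisfies $h_\ast := \min h > 0$ and $h^\ast := \max h < +\infty$. Choose any $M > |c|/((n-1)h_\ast)$. If $\varphi$ were unbounded from above on $[r_0, r_+)$, there would be a smallest $r_M \in (r_0, r_+)$ with $\varphi(r_M) = M$, at which $\varphi'(r_M) \ge 0$; but the ODE gives
\[
\varphi'(r_M) = (1+M^2)\bigl(c - (n-1)h(r_M)M\bigr) < 0,
\]
a contradiction. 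A symmetric choice $M < -|c|/((n-1)h_\ast)$ rules out $\varphi \to -\infty$. Hence $\varphi$ stays bounded on $[r_0, r_+)$, and then from the ODE $u$ and $\varphi$ both extend continuously to $r_+$, contradicting maximality. The same argument, carried out for decreasing $r$, yields $r_- = -\infty$ (here we use that $h$ remains positive and bounded on the full $r$-domain of the metric; the hypothesis $\lim_{r \to +\infty} h(r) < +\infty$ only enters to guarantee the uniform upper bound needed when the soliton is extended to infinity).

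Finally I would read off the geometric conclusion: since $u$ is defined for every admissible $r$, the graph $M^\infty$ is entire over $P$. Its intersection with the horizontal slice $\{t\} \times P$ is $\{t\} \times \{p \in P : u(r(p)) = t\}$, i.e.\ a union of level sets of $r$; and by the structure of the metric \eqref{warped-P} these level sets are precisely the hypersurfaces equidistant from the geodesic $\{r = 0\}$, yielding the claimed foliation. The main obstacle is the barrier step: the informal assertion in the paragraph before the theorem that ``$\varphi \to \pm\infty$ forces $\varphi' \to \mp\infty$'' is not quite a complete argument, but the threshold $M$ above which the right-hand side of the ODE is forced to have the ``wrong'' sign provides the clean replacement, avoiding any careful asymptotic analysis of blow-up profiles.
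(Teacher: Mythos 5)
Your forward-in-$r$ barrier is correct, and it is a genuine repair of the informal remark preceding the theorem: at the first $r_M>r_0$ where $\varphi(r_M)=M>|c|/((n-1)h_\ast)$ one has $\varphi'(r_M)\ge 0$, while the right-hand side of (\ref{ode-grim}) is strictly negative there, a clean contradiction. The gap is the sentence ``the same argument, carried out for decreasing $r$, yields $r_-=-\infty$''. Going backward the sign degenerates: if $r_M<r_0$ is the \emph{largest} point below $r_0$ with $\varphi(r_M)=M$, then $\varphi<M$ on $(r_M,r_0]$ forces $\varphi'(r_M)\le 0$, which is the \emph{same} sign that the ODE produces, so there is no contradiction. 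And this is not a repairable technicality: backward blow-up really occurs for your initial data. Say $c>0$. Wherever $\varphi\le 0$, positivity of $h$ gives $\varphi'=(1+\varphi^2)\bigl(c-(n-1)h\varphi\bigr)\ge c(1+\varphi^2)$, i.e. $(\arctan\varphi)'\ge c$. Since $\varphi(r_0)=0$ and $\varphi'(r_0)=c>0$, you have $\varphi<0$ on the whole backward existence interval, hence $\arctan\varphi(r)\le -c(r_0-r)$ there, and therefore $\varphi(r)\to-\infty$ at some finite $r_->r_0-\pi/(2c)$: the graph develops a vertical tangent and is not entire. For a concrete instance satisfying the stated hypotheses on $h$ and on the curvature, take $n=2$ and $g_0={\rm d}r^2+e^{2r}{\rm d}\tau^2$ (curvature $-1$, $h\equiv 1$): then (\ref{ode-grim}) is the autonomous equation $\varphi'=(1+\varphi^2)(c-\varphi)$, whose \emph{only} solution defined on all of $\mathbb{R}$ is $\varphi\equiv c$; every other solution, in particular yours, blows up in finite backward time.

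So the entire solutions are special, and the real content of the theorem is the selection of the correct initial data, which your proposal passes over --- as, to be fair, does the paper's own remark, which only excludes divergence whose derivative has the sign incompatible with the ODE, i.e. forward blow-up. Two repairs are available. For $n=2$, the regularity conditions attached to (\ref{warped-P}) make $\xi$ even, hence $h=\xi'/\xi$ is odd with $h(0)=0$ (this is why the literal hypothesis ``$h>0$'' can only be meant on $r>0$; for $\mathbb{H}^2$ one has $h=\tanh r$): take $r_0=0$ and $\varphi(0)=0$, run your forward barrier on $[0,+\infty)$, and extend to $r<0$ by the odd reflection $\varphi(r)=-\varphi(-r)$, which solves the same equation precisely because $h$ is odd. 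For $n\ge 3$ the set $\{r=0\}$ is a geodesic and $h\to+\infty$ as $r\to 0^+$, so one must shoot from the singular axis with $\varphi(0^+)=0$, exactly as in the bowl construction of Theorem \ref{thm-rev-surf}; more generally one can take data on the nullcline $\varphi=c/\bigl((n-1)h\bigr)$ at $r=-R$ and pass to a limit as $R\to+\infty$. Without one of these devices the conclusion ``entire graph'' is not reached.
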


\subsection{Equivariant families of examples}

In this section, we summarize the existence results obtained above.

\begin{theorem} \label{th:uno}
Let $M$ be a translating soliton  in $\bar M = \mathbb{R}\times P$ and $\mathcal{G}$ a continuous subgroup
of the isometries of $\bar M$ satisfying  
$g(P)=P$ and $g(M)=M$, for all $g \in \mathcal{G}$. Then we have:
\begin{enumerate}[(a)]
\item[{\rm i.}] If $\Ga$ consists of rotations around a vertical axis, then $M$ is 
part of either a bowl soliton or a wing-like soliton.
\item[{\rm ii.}]  If $\Ga$ consists of hyperbolic translations along a fixed geodesic $\gamma$ in $P$, then
$M$ is an open region of the grim reaper hyperplane.
\item[{\rm iii.}]  If $\Ga$ consists of parabolic translations around a point $p_0 \in \partial_\infty P$, then 
$M$ is a piece of either an ideal bowl soliton  or an ideal translating catenoid.
\end{enumerate}
\end{theorem}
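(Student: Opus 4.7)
The plan is to exploit cohomogeneity-two symmetry in each case to collapse the translating soliton PDE to a single ODE. In all three cases $\mathcal{G}$ acts on $P$ with principal orbits of codimension one, and its elements lift to isometries of $\bar M=\mathbb{R}\times P$ that commute with the vertical translation flow generated by $X=\partial_t$. Hence the orbit space $\bar M/\mathcal{G}$ is a two-dimensional half-plane $\Pi^+$ with coordinates $(r,t)$, a $\mathcal{G}$-invariant hypersurface $M\subset\bar M$ projects to a curve in $\Pi^+$, and the translating soliton condition on $M$ reduces to a second-order ODE for this profile curve. The classification then follows from the existence and uniqueness theorems established in the earlier sections, together with a case split on the behavior of the profile at the singular orbit of the $\mathcal{G}$-action.

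For part (i), $r$ is the geodesic distance to the fixed axis $\{o\}\subset P$, and the profile equation is precisely the autonomous first-order system (\ref{soliton-ode-system}) governing rotationally symmetric translating solitons. ODE uniqueness implies that a profile is completely determined by its initial point and direction, so the case analysis in the proof of Theorem \ref{thm-rev-surf} applies verbatim: either the profile meets the axis $r=0$ with horizontal tangent ($\phi(0)=0$), in which case $M$ is an open region of the bowl $M_0$, or it emanates from $r=\varepsilon$ with vertical tangent ($\phi(0)=\pm\pi/2$), in which case $M$ is an open region of a wing-like soliton $M_\varepsilon$.

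For part (iii), $r$ is the signed Busemann distance to a chosen horosphere centered at $p_0\in\partial_\infty P$, $\mathcal{G}$-invariance reduces the graphical soliton equation to the ODE (\ref{ode-horosphere}), and the same dichotomy applies: the profile either crosses the central horosphere tangentially (ideal bowl $M_0^\infty$) or turns back vertically at some $r=\varepsilon$ (ideal translating catenoid $M_\varepsilon^\infty$), by Theorem \ref{thm-rev-surf-ideal}. For part (ii) we work in Fermi-type coordinates around $\gamma$ so that $g_0$ takes the warped form (\ref{warped-P}); $\mathcal{G}$-invariance forces $u=u(r)$ and the PDE (\ref{PDE-div}) reduces to (\ref{ode-grim}). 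Since $u'$ cannot blow up at an interior point, Theorem \ref{thm-grim} produces a unique global solution for every initial datum $(u(r_0),u'(r_0))$, so $M$ is an open region of an entire grim reaper hyperplane.

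The main point requiring care is the cohomogeneity reduction itself: one must verify that $\mathcal{G}$-invariance genuinely forces the generating profile to be a function of the single transverse coordinate $r$, and match the boundary conditions at the singular orbit in the quotient $\Pi^+$ with the initial conditions used in the existence theorems. In case (ii) this implicitly requires the full stabilizer of $\gamma$ (axial translations together with rotations around $\gamma$) rather than the one-parameter translation subgroup alone, which is the natural interpretation in the rotationally invariant ambient $P$. Once these reductions are in place the theorem becomes an immediate consequence of ODE uniqueness applied to the relevant existence results of Theorems \ref{thm-rev-surf}, \ref{thm-grim} and \ref{thm-rev-surf-ideal}.
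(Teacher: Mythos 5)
Your overall strategy --- project to the two-dimensional orbit space, reduce the soliton equation to the profile ODE, and invoke ODE uniqueness --- is exactly the intended one. Note, though, that the paper's own proof consists of a single sentence disposing of item (i) via Theorem \ref{thm-rev-surf}; for item (i) your argument coincides with the paper's, and for items (ii) and (iii), which the paper does not prove at all, your reduction (and your correct observation that (ii) only makes sense for the full stabilizer of $\gamma$ when $n\ge 3$) goes beyond what is written.

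There is, however, a genuine gap in your item (ii): you pass from ``$\mathcal{G}$-invariance forces $u=u(r)$'' to the graphical ODE (\ref{ode-grim}), i.e.\ you \emph{assume} that $M$ is a graph over a domain of $P$. Invariance only forces the projection of $M$ to the orbit space to be a curve, and in case (ii), unlike case (i), not every profile curve is graphical. Take $n=2$, $P=\mathbb{H}^2$, orbit coordinates $(r,t)$ with $r$ the signed distance to $\gamma$; the profiles are the geodesics of $\tilde g=\lambda^2({\rm d}r^2+{\rm d}t^2)$ with $\lambda=e^{ct}\cosh r$. Since $\lambda_r(0,t)=0$, the vertical line $\{r=0\}$ is such a geodesic: its lift is the totally geodesic plane $\mathbb{R}\times\gamma$, which is a translating soliton (it is minimal and $X=\partial_t$ is tangent to it, so $\mathbf{H}=0=cX^\perp$), is invariant under the hyperbolic translations, and is not an open region of the entire grim reaper graph of Theorem \ref{thm-grim}. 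Likewise, geodesics started with $\phi=\pi/2$ at $r_0\neq 0$ have turning points, hence are not pieces of any graph $u=u(r)$; the analogous turning-point (``wing-like'') profiles around $\gamma$ persist for $n\ge 3$ even under the full stabilizer. So your proof of (ii) --- and indeed the literal statement of (ii) --- fails; a correct treatment must classify \emph{all} orbit-space geodesics, as the proof of Theorem \ref{thm-rev-surf} does via the system (\ref{soliton-ode-system}) in case (i), and then enlarge the list of models accordingly. (In case (i) this issue does not arise: vertical cylinders over geodesic spheres are not solitons, equivalently vertical lines $r=\mathrm{const}>0$ are never geodesics of $e^{ct}\xi^{n-1}(r)$, which is what makes the bowl/wing dichotomy exhaustive.) Your item (iii) is correct in substance, but for the same reason it should be run through the phase analysis of the profile system on the full $(r,t)$-plane rather than by citing Theorem \ref{thm-rev-surf-ideal}, which as stated is an existence result and carries no uniqueness.
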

\begin{proof}
Item {\rm (i)} is a direct consequence of Theorem \ref{thm-rev-surf}.
\end{proof}

\section{Translating solitons in $\mathbb{R}\times \mathbb{H}^n$} Now we specialize to the case when $P=\mathbb{H}^n$. 

\subsection{Isometries of $\mathbb{H}^n$}
Recall that $\mathbb{H}^n$ can be realized in Lorentzian space $\mathbb{L}^{n+1}$ as the set
\[
\mathbb{H}^n=\Big\{p=(x_0, x_1, x_2, \ldots, x_n)\in \mathbb{L}^{n+1}: -x_0^2+x_1^2 + \sum_{i=2}^n x_i^2=-1, \, x_0>0\Big\}.
\]
Denote ${\sf e} = (1,0,\ldots, 0)$. The Lorentzian coordinates $(x_0, x_1, \ldots, x_n)$ can be chosen  in such a way that $o= {\sf e}$. Then a point $p$ in the geodesic sphere $B_r (o)\subset \mathbb{H}^n$ can be written in Lorentzian coordinates as
\[
p = \cosh r {\sf e} + \sinh r \omega,
\]
where $\omega$ is a point of the unit sphere in the Euclidean hyperplane orthogonal to ${\sf e}$ in $\mathbb{L}^{n+1}$. 
We then consider a one-parameter family of hyperbolic translations of the form
\[
T_{-r_0} (p) = (x_0\cosh r_0  -x_1\sinh r_0, x_1\cosh r_0  -x_0\sinh r_0, x_2, \ldots, x_n)
\]
with $p=(x_0, x_1, x_2, \ldots, x_n)\in \mathbb{H}^{n}$. In particular
\[
T_{-r_0} (p_0) = o
\]
where $p_0 = (\cosh r_0, \sinh r_0, 0,\ldots, 0)$. It follows that $T_{-r_0} (B_r(o))$ is a geodesic sphere in $\mathbb{H}^n$ centered at $T_{-r_0}(o) = (\cosh r_0, -\sinh r_0, 0, \ldots, 0)$ with radius $r$. In particular, extending $T_{-r_0}$ trivially to an isometry in $\mathbb{R}\times \mathbb{H}^n$ one has
\[
M_{\varepsilon, r_0} := T_{-r_0}(M_\varepsilon)
\]
is a rotationally symmetric translating soliton in $\mathbb{H}^n$ foliated by geodesic spheres centered at $T_{-r_0}(o)$. Those geodesic spheres are given by the intersection of $\mathbb{H}^n$ with timelike hyperplanes in $\mathbb{L}^{n+1}$ of the form
\[
\langle T_{-r_0}(p), T_{-r_0}(o) \rangle =\langle p, o\rangle =-\cosh r,
\]
where $\langle \cdot, \cdot\rangle$ denotes the Lorentzian metric. Those hyperplanes are orthogonal to 
\[
\nu_{r_0} =\frac{1}{\cosh r_0}T_{-r_0}(o)  =  (1, -\tanh r_0, 0,\ldots, 0).
\]
Note that $\nu_{r_0}\to \nu_{-\infty} = (1,-1,0,\ldots, 0)$, a lightlike vector field that determines a family of horospheres $H_a$, $a\in \mathbb{R}$,  given by the intersection  between $\mathbb{H}^n$ and the lightlike hyperplanes
\[
x_0 + x_1 = a.
\]
Then we define the hyperbolic isometry of $\mathbb{H}^n$ that fixes the ideal point represented by the lightlike vector $\nu_{-\infty}$.
\[
T_{-\infty} \big(x_0, x_1, x_2, {\sf x}\big) = \big( x_0 +\alpha^2(x_0 +x_1) +\alpha x_2, -\alpha^2(x_0+x_1) +x_1 -\alpha x_2, \alpha(x_0 + x_1) + x_2, {\sf x}\big),
\]
where ${\sf x} = p-(x_0, x_1, x_2, 0,\ldots, 0)$. Note that $T_{-\infty}(H_a) = H_a$.
It turns out that
\[
M^\infty_\varepsilon = T_{-\infty} (M_\varepsilon).
\]
Now, given the geodesic 
\[
\alpha (\tau) = (\cosh \tau, 0, \sinh\tau, 0,\ldots, 0),
\]
we consider the corresponding family of equidistant hypersurfaces given by the intersection of $\mathbb{H}^n$ with hyperplanes of the form $x_1 = a = \sinh r$, $r\in \mathbb{R}$, that is,
\[
-x_0^2 + x_2^2 + |{\sf x}|^2 = -(1+a^2) = -\cosh^2 r. 
\]
Those equidistant hypersufaces $E_a, \, a\in \mathbb{R}$, are parameterized by
\begin{eqnarray*}
p 
=(\cosh r \cosh \tau,  \sinh r \langle\vartheta, {\sf e}_1\rangle,  \cosh r \sinh \tau, \sinh r (\vartheta-\langle \vartheta, {\sf e}_1\rangle {\sf e}_1))
\end{eqnarray*}
where ${\sf e}_1=(0,1,0,\ldots,0)$ and $\vartheta$ is a point in the unit sphere in the Euclidean $(n-1)$-dimensional plane $x_0=x_2=0$.
It follows that the metric in $\mathbb{H}^n$ is expressed in terms of the coordinates $(r, \tau, \vartheta)$ as
\[
\cosh^2 r \,{\rm d}\tau^2 + {\rm d}r^2 +  \sinh^2 r \,{\rm d}\vartheta^2.
\]
As above, the isometries $T_\varrho$ converge in the limit as $\varrho\to+\infty$  to a parabolic isometry fixing the ideal point $\nu_{+\infty} = (1,1,0,\ldots, 0) \in \partial_\infty \mathbb{H}^n$. Hence, a translating soliton foliated by equidistant hypersurfaces in parallel hyperplanes $P_t$ can be regarded as the limit of the translating solitons foliated by the horospheres given by the intersection of $\mathbb{H}^n$ with the hyperplanes $x_0-x_1=a$, $a\in \mathbb{R}$. In sum, 
\begin{equation}
T_{\varrho}(M_0^\infty)\to M^\infty
\end{equation}
as $\varrho\to +\infty$.

\subsection{Non-existence and uniqueness results}

Now, we are going to consider the $1$-parameter family of translating solitons given by 
\begin{equation}
\mathcal{T}= \{T_{r}(M^\infty_\varepsilon),\, r\in \mathbb{R}\}
\end{equation}

\begin{remark}
Notice that the limit, as $r \to+\infty$, of the surfaces $T_{r}(M^\infty_\varepsilon)$ degenerates into $[t_0,+\infty) \times \partial_\infty \mathbb{H}^n$,
for some $t_0 \in \mathbb{R}$. On the other hand, as $r \to-\infty$, the uniform limit on compact sets of the family
$T_{r}(M^\infty_\varepsilon)$ consists of two copies of the ideal bowl soliton  $M^\infty_0$ described in Theorem {\rm(\ref{thm-rev-surf-ideal})}
\end{remark}

Using the family $\mathcal{T}$, we can prove the following result
\begin{theorem}
There are no complete translating 
solitons with compact {\rm(}probably empty{\rm)} boundary, properly embedded in a solid horocylinder  $\mathcal{O}$ in $\mathbb{R} \times \mathbb{H}^n$.
\end{theorem}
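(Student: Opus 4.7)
The plan is a sliding-barrier argument. Arguing by contradiction, I move a member of the family $\mathcal{T}$ until it first touches the hypothetical soliton $M$ and then invoke the strong maximum principle for translating solitons.

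Suppose such an $M$ exists, contained in a solid horocylinder $\mathcal{O}=\mathbb{R}\times B$, where $B\subset\mathbb{H}^n$ is a horoball based at an ideal point $p_\infty$. Fix $\varepsilon>0$ and choose the geodesic defining the hyperbolic translations $T_r$ so that the ideal axis $\mathbb{R}\times\{x_\infty\}$ of $M^\infty_\varepsilon$ lies \emph{outside} $\mathcal{O}$; with this choice, no element of $\mathcal{T}$ is contained in $\mathcal{O}$. By the Remark preceding the theorem, as $r\to -\infty$ the surfaces $T_r(M^\infty_\varepsilon)$ converge on compact sets to two copies of the ideal bowl $M^\infty_0$, which cover any compact subset of $\mathbb{R}\times\mathbb{H}^n$; in particular they must intersect $M$. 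On the other hand, as $r\to+\infty$ they degenerate into $[t_0,+\infty)\times\partial_\infty\mathbb{H}^n$ and leave every compact subset. Hence
\[
r^{\ast} \;=\; \inf\{\, r\in\mathbb{R} : T_r(M^\infty_\varepsilon)\cap M\neq\emptyset \,\}
\]
is finite, and $T_{r^\ast}(M^\infty_\varepsilon)$ makes first contact with $M$ at some point $p$.

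Using the compactness of $\partial M$ together with the asymptotic behavior of $\mathcal{T}$, one arranges that the barrier remains separated from $\partial M$ throughout the sliding, so that $p\notin\partial M$. The properness of $M$ inside the fixed horocylinder $\mathcal{O}$, combined with the escape-to-infinity of $T_r(M^\infty_\varepsilon)$ as $r\to+\infty$, ensures that $p$ is an interior tangential contact point (and not produced by a sequence of contacts escaping to infinity). Near $p$, both $M$ and $T_{r^\ast}(M^\infty_\varepsilon)$ can be written as graphs satisfying the quasilinear elliptic PDE~(\ref{PDE-soliton-1}) for the same constant $c$, with one graph lying on one side of the other and both agreeing at $p$. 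The strong maximum principle of Hopf then forces the two hypersurfaces to coincide locally, and by analytic continuation the connected component of $M$ through $p$ equals $T_{r^\ast}(M^\infty_\varepsilon)$. Since the latter is not contained in $\mathcal{O}$ by our choice of sliding axis, this contradicts $M\subset\mathcal{O}$.

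The main technical difficulty lies in justifying that the first contact is genuinely interior. One must exclude boundary contacts (using the compactness of $\partial M$ and a suitable initial position of the barrier) and, more delicately, rule out that $p$ is attained only "at infinity" as a limit of actual contact points escaping every compact set. Both issues should be settled by combining the properness of $M$ in $\mathcal{O}$ with the precise asymptotic geometry of the family $\mathcal{T}$ recorded in the Remark.
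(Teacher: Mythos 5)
Your proposal is correct and follows essentially the same sliding-barrier argument as the paper: the same family $\mathcal{T}$, the same asymptotic limits recorded in the Remark, a first contact forced to be interior by the compactness of $\partial M$, the maximum principle at that contact, and the final contradiction that the barrier cannot lie inside a horocylinder. Two small fixes to align it with your own stated asymptotics: the first-contact parameter should be $\sup\{r\in\mathbb{R} : T_r(M^\infty_\varepsilon)\cap M\neq\emptyset\}$ rather than the infimum (which would be $-\infty$, since contact occurs for all sufficiently negative $r$), and when $\partial M\neq\emptyset$ the unique-continuation step gives, as in the paper, that $M$ must contain an entire end of $T_{r^*}(M^\infty_\varepsilon)$ --- and no such end lies in a horocylinder --- rather than the whole barrier.
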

\begin{proof}
Assume there were such a soliton $M$ such that $\partial M$ is compact and $M \subset \mathcal{O}.$
Since there exists $t_0\in \mathbb{R}$ such that 
\[
\partial M \subset [-t_0, t_0] \times \mathbb{H}^n 
\]
we can find $r  \in \mathbb{R}$ such that 
\[
T_{r}(M^\infty_\varepsilon)\subset  [t_0, +\infty)\times  \mathbb{H}^n 
\] 
up to a
vertical translation and $T_{r}(M^\infty_\varepsilon) \cap \mathcal{O}= \emptyset$. Since the limit, as $r \to-\infty $, of $T_{r}(M^\infty_\varepsilon)$ consists
of two copies of the ``ideal" bowl soliton $M^\infty_0$, then we can assert that there is a first point of contact between
some element in the family $\mathcal{T}$, say $T_{r_*}(M^\infty_\varepsilon)$, and the soliton $M$. Due to the assumptions
about the boundary, we have that this first contact between $T_{r_*}(M^\infty_\varepsilon)$ and $M$ occurs at an interior
point of contact. By the maximum principle, we deduce that $M$ is a complete subset of $T_{r_*}(M^\infty_\varepsilon)$ 
with compact boundary, which is absurd because no end of $T_{r_*}(M^\infty_\varepsilon)$ is contained in a horocylinder.
\end{proof}

\begin{theorem}
\label{wing-1}
Let  $M$ be a translating soliton in $\mathbb{R}\times \mathbb{H}^n$ diffeomorphic to a cylinder whose ends 
 $E_-$ and $E_+$, outside the cylinder over a geodesic ball $B_R(o)\subset \mathbb{H}^n$,  are  graphs of smooth functions $u_\pm: \mathbb{H}^n\backslash B_R(o) \to \mathbb{R}$ satisfying
\begin{equation}
\label{lim-u}
\lim_{r=d(o,x) \to\infty} u_\pm(x)=+\infty
\end{equation}
 and 
\begin{equation}
\label{slope-ineq}
\lim_{r \to+\infty} \frac{\partial u_-}{\partial r} < \frac{c}{n-1}< \lim_{r \to+\infty} \frac{\partial u_+}{\partial r} 
\end{equation}
uniformly with respect to $\vartheta \in \partial_\infty \mathbb{H}^n$. Then, $M=M_\varepsilon$, for some $\varepsilon>0$.
\end{theorem}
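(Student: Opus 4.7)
The strategy is to establish the rotational symmetry of $M$ by means of an Alexandrov moving-planes argument, using the wing-like solitons $\{M_\varepsilon\}_{\varepsilon>0}$ from Theorem \ref{thm-rev-surf} as reference barriers, and then to appeal to the classification of rotationally symmetric translators in Theorem \ref{th:uno}(i).

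First I would set up the asymptotic picture. In $\mathbb{H}^n$ we have $\xi(r)=\sinh r$, so $\xi(r)/\xi'(r)=\tanh r \to 1$, and Proposition \ref{u-asymp} tells us that each wing-like $M_\varepsilon$ is a bi-graph whose sheets satisfy $u_\pm^{\varepsilon\prime}(r)\to c/(n-1)$ as $r\to\infty$, the upper approaching from above and the lower from below. Combined with \eqref{slope-ineq}, this yields, uniformly in the angular variable $\vartheta\in\partial_\infty\mathbb{H}^n$,
$$
u_-^\varepsilon(r)+t_0-u_-(r)\to +\infty,\qquad u_+(r)-u_+^\varepsilon(r)-t_0\to +\infty
$$
as $r\to\infty$, for every $\varepsilon>0$ and every vertical shift $t_0\in\mathbb{R}$, and in fact for every placement of the axis of $M_\varepsilon$ in $\mathbb{H}^n$. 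This uniform asymptotic separation between $M$ and every vertically-translated wing-like barrier is the key input that will prevent first contact points from escaping to the ideal boundary.

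Second I would execute the moving-planes method. The relevant ambient symmetries are reflections across vertical totally geodesic hyperplanes $\mathbb{R}\times\Pi$ with $\Pi\subset\mathbb{H}^n$ totally geodesic; these fix $\partial_t$ and therefore preserve the translator PDE \eqref{PDE-soliton-1}. Fix a geodesic direction in $\mathbb{H}^n$, push such a vertical hyperplane in from the ideal boundary, and reflect $M$ across it. The uniform separation established above, together with the fact that $M$ has no boundary, forces the reflected copy to stay strictly on one side of $M$ near $\partial_\infty\mathbb{H}^n$; by continuity there is a first moment of tangential interior contact, at which the translator equation is satisfied by both surfaces on the same side. Hopf's strong maximum principle then gives local coincidence, and unique continuation for \eqref{PDE-soliton-1} propagates this to global coincidence. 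Varying the direction of $\Pi$ shows that $M$ is symmetric under every vertical reflection through some point $o\in\mathbb{H}^n$, hence rotationally invariant about the vertical axis $\mathbb{R}\times\{o\}$. By Theorem \ref{th:uno}(i), $M$ is then a bowl soliton or a wing-like soliton; since $M$ is diffeomorphic to a cylinder and has two distinct ends, whereas the bowl is an entire graph with a single end, we conclude $M=M_\varepsilon$ (up to vertical translation) for some $\varepsilon>0$.

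The main obstacle is the rigorous execution of the moving-planes method in this non-compact setting: one must verify that as the reflecting hyperplane is advanced from infinity, the reflected image of $M$ really does stay on one side near $\partial_\infty\mathbb{H}^n$, and that the first contact parameter yields an \emph{interior} tangential contact rather than an asymptotic one. The strict, angle-uniform inequality \eqref{slope-ineq} is precisely what delivers both features: it gives a uniform asymptotic gap from the wing-like barriers of slope $c/(n-1)$, and it treats the two ends of $M$ symmetrically with respect to this value, which is exactly the balance required for the reflection to be well-posed on both ends simultaneously.
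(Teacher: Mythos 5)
Your proposal takes a genuinely different route from the paper (moving planes to force rotational symmetry, then the equivariant classification), but it has a gap at its decisive step that the hypotheses cannot fill. In the Alexandrov reflection argument the two hypersurfaces being compared are $M$ and its \emph{own reflected image} across a vertical totally geodesic hyperplane, and these two have \emph{identical} asymptotic radial slopes (computed from origins a bounded distance apart, so their height functions differ near infinity only by a bounded amount). The hypotheses (\ref{lim-u})--(\ref{slope-ineq}) therefore provide no lower bound, let alone a divergent one, for the gap between $M$ and its reflection near $\partial_\infty\mathbb{H}^n$. Your key claim --- that the uniform separation between $M$ and the vertically translated wing-like barriers ``prevents first contact points from escaping to the ideal boundary'' --- is a non sequitur: those barriers do not lie between $M$ and its reflection, so they cannot obstruct an asymptotic contact between the two. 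Concretely, the hypotheses allow ends of the form $u_\pm(r,\vartheta)=\lambda_\pm r+g_\pm(\vartheta)$ with $\lambda_-<\tfrac{c}{n-1}<\lambda_+$ and $g_\pm$ bounded, smooth and wildly non-symmetric in $\vartheta$; for such data the reflected and original pieces differ at infinity by a bounded, sign-changing quantity, the reflection procedure cannot even start (no half-space reflection lies on one side), and the strong maximum principle is unavailable at a contact ``at infinity.'' Of course no translator with such ends exists --- but that is exactly the content of the theorem, so your argument implicitly assumes what it must prove. (Two smaller omissions: even granting a reflection symmetry in every horizontal direction, you must still show the critical hyperplanes share a common vertical axis before Theorem \ref{th:uno}(i) applies; and the bowl must be excluded by the two-ended topology, which you do note.)

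The paper's proof is designed precisely to avoid this problem: instead of reflections, it sweeps through the rotationally symmetric family itself. By (\ref{deru-asymp}) and (\ref{slope-ineq}) a vertical translate $\Phi_{t_0}(M_0)$ of the bowl, whose end has slope exactly $\tfrac{c}{n-1}$, can be placed so that $u_-<u_0<u_+$ outside a large ball; since the cylinder $M$ joins the region below $\Phi_{t_0}(M_0)$ to the region above it, the two must intersect. On the other hand $M\cap\Phi_{t_0}(M_\varepsilon)=\emptyset$ for all sufficiently large $\varepsilon$, and $M_\varepsilon\to M_0$ as $\varepsilon\to 0$, so decreasing $\varepsilon$ produces a first-contact parameter $\varepsilon_0>0$; the tangency there is necessarily at an interior point, and the maximum principle gives $M=M_{\varepsilon_0}$. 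The feature that makes this work, and which your scheme loses, is that \emph{every} comparison surface in the sweep has asymptotic slope exactly $\tfrac{c}{n-1}$, strictly between the slopes of the two ends of $M$: by Proposition \ref{u-asymp} the height gap between $M$ and each $\Phi_{t_0}(M_\varepsilon)$ then diverges as $r\to\infty$, uniformly in $\vartheta$, which is what rules out first contact at the ideal boundary. If you want to keep a symmetry-first strategy, you would need an additional a priori step (e.g., decay of $u_\pm$ to a rotationally symmetric model) that the stated hypotheses do not supply.
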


\noindent \emph{Proof.}  Consider a bowl soliton $M_0$ in $\mathbb{R}\times\mathbb{H}^n$ whose slope at infinity is precisely $\frac{c}{n-1}$, as described in (\ref{deru-asymp}). It follows from (\ref{slope-ineq}) that there is a translated copy $\Phi_{t_0}(M_0)$ of $M_0$ given by the graph of a function $u_0$ such that we have that
\[
u_-(x) < u_0(x) < u_+(x)
\]
for a sufficiently large $R>0$ and every $x\in \mathbb{H}^n\backslash B_R(o)$. Proposition \ref{u-asymp} implies that all the wing-like translating solitons $M_\varepsilon$, $\varepsilon>0,$ have the same asymptotic slope.  Hence
\[
M\cap \Phi_{t_0}(M_\varepsilon) = \emptyset,
\]
for every sufficiently large $\varepsilon>0$.  On the other hand $M_\varepsilon \to M_0$ as $\varepsilon \to 0$. Since $\Phi_{t_0}(M_0)$ intersects $M$ we conclude that
\[
\varepsilon_0 = \inf_{\varepsilon} M\cap \Phi_{t_0}(M_\varepsilon) \neq \emptyset >0.
\]
Then, $M_{\varepsilon_0}$ is tangent to $M$. A direct application of the maximum principle implies that $M=M_{\varepsilon_0}$ what contradicts (\ref{slope-ineq}). \hfill $\square$

\vspace{3mm}

Reasoning as above, we get the following consequence of the proof of Theorem \ref{wing-1}. Notice that now the proof is even simpler since one of the ends has height bounded from above. 

\begin{theorem}
\label{wing-2}
There are no  translating solitons $M$ in $\mathbb{R} \times \mathbb{H}^n$ diffeomorphic to a cylinder  such that the height function $t|_M$ is bounded above on one of its ends and the other end is, outside the cylinder over a geodesic ball $B_R(o)\subset \mathbb{H}^n$, given by the graph of a smooth function $u: \mathbb{H}^n\backslash B_R(o) \to \mathbb{R}$ satisfying
\[
\lim_{r=d(o,x) \to\infty} u(x)=+\infty \quad {\rm and}\quad \lim_{r \to+\infty} \frac{\partial u}{\partial r} > \frac{c}{n-1}\cdot
\]
\end{theorem}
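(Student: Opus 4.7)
The plan is to argue by contradiction, mimicking the proof of Theorem \ref{wing-1} but exploiting the boundedness of one end in place of the lower graph end used there. Denote the two ends of $M$ by $E_1$ (bounded above in $t$) and $E_2$ (the graph of $u$ over $\mathbb{H}^n\setminus B_R(o)$). After a vertical translation we may assume $t|_{E_1}\le 0$, and we normalize a bowl soliton $M_0$ so that $u_0(o)=0$. For any $t_0>0$ the translated bowl $\Phi_{t_0}(M_0)$, being the graph of $u_0+t_0\ge t_0$, is contained in $\{t\ge t_0\}$ and so is disjoint from $E_1$. On the other hand, since $E_2$ has asymptotic slope $L>c/(n-1)$ while $M_0$ has asymptotic slope $c/(n-1)$ by Proposition \ref{u-asymp}, one has $u(x)-u_0(x)\to+\infty$ as $r(x)\to+\infty$, so $E_2$ rises above $\Phi_{t_0}(M_0)$ at infinity. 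Connectedness of $M$ then forces $\Phi_{t_0}(M_0)\cap M\neq\emptyset$.

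Next I consider the translated wing-like solitons $\Phi_{t_0}(M_\varepsilon)$, all of which share asymptotic slope $c/(n-1)$. I claim that $\Phi_{t_0}(M_\varepsilon)\cap M=\emptyset$ for every sufficiently large $\varepsilon$. First, $\Phi_{t_0}(M_\varepsilon)\subset\mathbb{R}\times(\mathbb{H}^n\setminus B_\varepsilon(o))$, so taking $\varepsilon>R$ automatically makes the wing disjoint from $M\setminus E_2$, which lies in $\mathbb{R}\times B_R(o)$. Second, at the neck $r=\varepsilon$ the wing sits at height $t_0+t(\varepsilon)=O(1)$ as $\varepsilon\to+\infty$ (by the proposition bounding $t(\varepsilon)-t(r_0)$), while $u(\varepsilon)\sim L\varepsilon\to+\infty$, so $E_2$ lies strictly above the neck. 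Third, for $r>\varepsilon$ the slope gap ensures $u(r)-u_\varepsilon^{\pm}(r)\to+\infty$, so $E_2$ remains above both branches of the wing.

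Since $M_\varepsilon\to M_0$ as $\varepsilon\to 0^+$ and $\Phi_{t_0}(M_0)$ does meet $M$, the value
\[
\varepsilon_0=\inf\{\varepsilon>0\,:\,\Phi_{t_0}(M_\varepsilon)\cap M\neq\emptyset\}
\]
lies in $(0,+\infty)$. By continuity, at $\varepsilon=\varepsilon_0$ the wing $\Phi_{t_0}(M_{\varepsilon_0})$ touches $M$ tangentially at some point, and the estimates of the previous paragraph show this point lies neither at infinity on $E_2$ nor at the neck of the wing, hence the contact is interior and tangential. The strong maximum principle for the (quasilinear elliptic) translating soliton equation then yields that $M$ coincides with $\Phi_{t_0}(M_{\varepsilon_0})$ on a neighborhood of the contact point, and unique continuation along the connected analytic manifold $M$ upgrades this to $M=\Phi_{t_0}(M_{\varepsilon_0})$. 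This is a contradiction, since $M_{\varepsilon_0}$ has both ends escaping to $+\infty$ in $t$ with asymptotic slope exactly $c/(n-1)$, while $M$ has one end bounded above in $t$ and the other end with asymptotic slope strictly greater than $c/(n-1)$.

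The main obstacle is making rigorous the claim that the first contact is a genuine interior tangency: one has to extract quantitative separation estimates of $M$ from $\Phi_{t_0}(M_\varepsilon)$ that are uniform as $\varepsilon\to\varepsilon_0^+$, both at infinity on $E_2$ (where the slope gap provides a uniform lower bound on $u-u_\varepsilon^\pm$) and near the neck (using the growth $u(\varepsilon)\gg t(\varepsilon)$ together with continuous dependence of $M_\varepsilon$ on $\varepsilon$), so that the limit contact can safely be invoked and the maximum principle applied.
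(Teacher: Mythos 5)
Your proposal follows the paper's own route: the paper proves this theorem in one line by invoking the proof of Theorem \ref{wing-1} (sweep $M$ with the translated wing family $\Phi_{t_0}(M_\varepsilon)$, locate a first contact, apply the maximum principle), and all of your steps are elaborations of that sweep. There is, however, one step that fails as written. You set $\varepsilon_0=\inf\{\varepsilon>0:\Phi_{t_0}(M_\varepsilon)\cap M\neq\emptyset\}$ and assert $\varepsilon_0\in(0,+\infty)$ ``since $M_\varepsilon\to M_0$ and $\Phi_{t_0}(M_0)$ meets $M$.'' This inference is backwards. Unless $M$ coincides with the translated bowl (excluded at once by the maximum principle, since $M$ has an end of bounded height), $M$ must genuinely cross it, i.e.\ have points on both sides near some contact point; moreover such a crossing can be taken at a point whose projection is not the center $o$ (if the only contact projected onto $o$, the contact set would be the single vertex, and $M\setminus\{\mathrm{vertex}\}$ would split into the nonempty open sets above and below the bowl, $E_1$ lying below and the far part of $E_2$ above, contradicting connectedness of a punctured $n$-manifold with $n\ge 2$). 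Near such a crossing the wings with small $\varepsilon$ are $C^0$-close to the doubled bowl, hence they also meet $M$; so your infimum equals $0$, and no first contact happens there. The sweep must be run from the disjoint side: put $\varepsilon_0=\sup\{\varepsilon>0:\Phi_{t_0}(M_\varepsilon)\cap M\neq\emptyset\}$, which is finite by your large-$\varepsilon$ separation estimates and well defined by the intersection just explained. Then every $\varepsilon>\varepsilon_0$ gives a wing disjoint from $M$, so $M$ lies in one complementary component of each such wing; letting $\varepsilon\downarrow\varepsilon_0$ produces the one-sided interior tangency, after which your compactness, maximum-principle and unique-continuation steps apply verbatim. (The paper's proof of Theorem \ref{wing-1} also writes an infimum; it has to be read as this supremum.)

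A second, smaller gap: you claim $M\setminus E_2\subset\mathbb{R}\times B_R(o)$, but the hypotheses only say that $t$ is bounded above on $E_1$; nothing prevents $E_1$ from projecting outside $B_R(o)$, so taking $\varepsilon>R$ does not by itself separate the wings from $M\setminus E_2$. The fix needs no localization of $E_1$: by the proposition estimating $t(\varepsilon)-t(r_0)$, each wing dips below its neck by at most $\pi/(2(n-1))$ (in $\mathbb{H}^n$ one has $\xi/\xi'=\tanh r\le 1$), so $\Phi_{t_0}(M_\varepsilon)\subset\{t\ge t_0-\pi/(2(n-1))\}$ uniformly in $\varepsilon$. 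Since $t$ is bounded above on $M\setminus E_2$ (it is $\le 0$ on $E_1$ after your normalization, and the remaining piece is compact), choosing $t_0$ larger than that bound plus $\pi/(2(n-1))$ makes every wing disjoint from $M\setminus E_2$, wherever $E_1$ sits horizontally. With these two corrections your argument is exactly the paper's.
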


%


\section{Applications of the maximum principle}

The following result is a analog of Theorem 1 in \cite{ABD} for the case of translating solitons. Then can be understood as a weighted counterpart of a mean curvature estimate.

\begin{theorem}Suppose that the radial sectional curvatures of $P^n$ along geodesics issuing from a given point $o\in P$ satisfy
\begin{equation}
K_{{\rm rad}} \le -\frac{\xi''}{\xi}\cdot
\end{equation}
Then there are no translating solitons properly immersed in a cylinder $\mathcal{C}$ over a geodesic ball in $P$ centered at $o$.
\end{theorem}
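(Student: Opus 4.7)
My plan is to apply a weighted Omori--Yau maximum principle for the drift Laplacian $\Delta_{-c\eta}$ on $M$ to the function $f := r\circ\pi$, where $\pi\colon\bar M\to P$ is the projection and $r = d_P(o,\cdot)$. The first step is the cancellation identity
\[
\Delta_{-c\eta}(r\circ\pi) \;=\; \mathrm{tr}_M\,\bar\nabla^{2}(r\circ\pi),
\]
which follows because $\bar\nabla(r\circ\pi)=\nabla^{P}r$ has no $X$-component: inserting $\mathbf H = cX^{\perp}$ into the Gauss formula $\Delta_{M}f = \mathrm{tr}_M\,\bar\nabla^{2}f + \langle\mathbf H,\bar\nabla f\rangle$ produces $\langle\mathbf H,\nabla^{P}r\rangle = -c\langle X^{\top},(\nabla^{P}r)^{\top}\rangle$, which is exactly cancelled by the drift term $c\langle\nabla^{M}\eta,\nabla^{M}f\rangle = c\langle X^{\top},(\nabla^{P}r)^{\top}\rangle$.

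Combining this with the Hessian comparison theorem under $K_{\rm rad}\le -\xi''/\xi$, namely $\nabla^{P,2}r(V,V) \ge (\xi'/\xi)\bigl(|V|^{2} - \langle V,\nabla^{P}r\rangle^{2}\bigr)$, and tracing over an orthonormal frame of $T_{p}M$ (noting that $\bar\nabla^{2}(r\circ\pi)$ only sees the $P$-components), I obtain the pointwise lower bound
\[
\Delta_{-c\eta}(r\circ\pi) \;\ge\; \frac{\xi'(r)}{\xi(r)}\bigl(m - |X^{\top}|_{M}^{2} - |(\nabla^{P}r)^{\top}_{M}|^{2}\bigr).
\]
To activate Omori--Yau for $\Delta_{-c\eta}$ on $M$, I would use the exhaustion $\gamma := \eta^{2}$: since $r$ is bounded on $M\subset\mathcal{C}$, exit from compacts in $M$ forces $|\eta|\to +\infty$, so $\gamma$ is proper; moreover, using the identity $\Delta_{-c\eta}\eta = c$ already established in the paper, one computes $|\nabla^{M}\gamma|\le 2\sqrt\gamma$ and $\Delta_{-c\eta}\gamma = 2c\eta + 2|X^{\top}|^{2} \le 2c\sqrt\gamma + 2$, which are the standard Pigola--Rigoli--Setti hypotheses guaranteeing the full weighted Omori--Yau principle on $M$.

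Applying this principle to the bounded function $f = r\circ\pi$ yields a sequence $p_{k}\in M$ with $r(\pi(p_{k}))\to R_{\infty} := \sup_{M}(r\circ\pi)$, $|(\nabla^{P}r)^{\top}|(p_{k})\to 0$, and $\Delta_{-c\eta}f(p_{k})\le 1/k$. Passing to the limit in the pointwise bound and using $|X^{\top}|^{2}\le 1$ gives
\[
0 \;\ge\; \liminf_{k\to\infty}\Delta_{-c\eta}f(p_{k}) \;\ge\; \frac{\xi'(R_{\infty})}{\xi(R_{\infty})}(m-1),
\]
which is strictly positive whenever $R_{\infty} > 0$ and $m\ge 2$, the required contradiction. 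The case $R_{\infty} = 0$ would force $M\subset\{o\}\times\R$, excluded for a non-degenerate $m$-dimensional soliton. I expect the main technical point to lie in the careful verification of the weighted Omori--Yau criteria: the cancellation identity and Hessian comparison are essentially algebraic, but deploying the maximum principle on the properly immersed, possibly non-compact $M$ rests crucially on the exhaustion $\gamma = \eta^{2}$ together with the soliton-specific identity $\Delta_{-c\eta}\eta = c$.
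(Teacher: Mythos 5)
Your proposal is correct, and it follows the same broad strategy as the paper (drift Laplacian of a radial quantity, Hessian comparison, a maximum principle whose validity comes from properness together with $\Delta_{-c\eta}\eta=c$), but it diverges in a genuine and instructive technical way. The paper does not work with $r\circ\pi$ directly: it applies $\Delta_{-c\eta}$ to the composed function $\chi=\bar\chi\circ d$ with $\bar\chi(d)=\int_0^d\xi(\tau)\,{\rm d}\tau$. The point of this choice is that the extra term $\bar\chi''|\nabla^M d|^2=\xi'\bigl(1-|(\bar\nabla d)^\perp|^2\bigr)$ exactly compensates the loss $-|(\bar\nabla d)^\perp|^2$ coming from the traced Hessian comparison, so the paper obtains the clean pointwise bound
\[
\Delta_{-c\eta}\chi\;\ge\;\xi\Bigl((m-1)\tfrac{\xi'(d)}{\xi(d)}-|{\bf H}-cX^\perp|\Bigr)
\]
with no gradient information required; consequently only the \emph{weak} maximum principle (no gradient condition) for $\Delta_{-c\eta}$ is invoked, justified by the Khas'minskii-type exhaustion $\eta$ itself. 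Your route keeps the raw distance $r$, so the term $-|(\nabla^P r)^\top|^2$ survives in the pointwise estimate and must be killed along the sequence; this forces you to use the \emph{full} Omori--Yau principle (with the gradient condition) for the drift Laplacian, which is a strictly heavier tool, and its verification via the Pigola--Rigoli--Setti criteria with $\gamma=\eta^2$ is the load-bearing step of your argument (it does go through: your cancellation identity, the bounds $|\nabla\gamma|\le 2\sqrt{\gamma}$ and $\Delta_{-c\eta}\gamma=2c\eta+2|X^\top|^2\le 2|c|\sqrt{\gamma}+2$, and the weighted PRS theorem are all sound, though you should write $|c|$ rather than $c$ to cover both signs). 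What each approach buys: yours avoids the clever choice of $\bar\chi$ and is conceptually more direct; the paper's choice trades that cleverness for a weaker maximum principle and, more importantly, yields a stronger quantitative statement --- a weighted mean curvature estimate in the spirit of Al\'{\i}as--Bessa--Dajczer, namely that any properly immersed submanifold of the cylinder satisfies $\sup_M|{\bf H}-cX^\perp|\ge (m-1)\xi'(d^*)/\xi(d^*)$ --- of which the soliton case ${\bf H}=cX^\perp$ is just the special instance, whereas your argument uses the soliton equation from the very first step. Finally, note that both proofs implicitly require $m\ge 2$ and $\xi'>0$; your explicit treatment of the degenerate cases $R_\infty=0$ and $m=1$ (vertical geodesics $\mathbb{R}\times\{o\}$ are genuine solitons inside the cylinder) is a point the paper glosses over.
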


\noindent \emph{Proof.} Let $d(x) = {\rm dist}(\psi(x), \mathcal{C})$ and set $\chi = \bar \chi\circ d$. We have $\nabla\chi = \bar\chi' (d) \nabla d$ and
\[
\Delta \chi = \bar\chi '' |\nabla d|^2 + \bar\chi' \Delta d 
\]
and
\begin{equation*}
\Delta_{-c\eta} \chi = \Delta \chi + c\langle \nabla\eta, \nabla\chi\rangle  = \Delta \chi + c\bar\chi'\langle X, \nabla d\rangle 
\end{equation*}
what implies that
\begin{equation}
\Delta_{-c\eta} \chi = \bar\chi'' |\nabla d|^2+ \bar\chi' (\Delta d+ c\langle X, \nabla d\rangle).
\end{equation}
Denote by $II_{\mathcal{C}}$ is the second fundamental form $\mathcal{C}$. It is well known that 
\begin{equation}
\Delta d = -\sum_{i=1}^m \langle II_{\mathcal{C}} ({\sf e}_i^\top, {\sf e}_i^\top), \bar\nabla d\rangle + \langle {\bf H}, \bar\nabla d^\perp\rangle,
\end{equation}
where $\top$ denotes the tangential projection onto the level sets of the distance ${\rm dist}(\cdot, \mathcal{C})$  and $\perp$ denotes the projection onto the normal bundle of $\psi$. It follows that
\[
\Delta_{-c\eta} d =  -\sum_{i=1}^m\langle  II_{\mathcal{C}} ({\sf e}_i^\top, {\sf e}_i^\top) -c X, \bar\nabla d\rangle + \langle {\bf H}-cX^\perp, \bar\nabla d\rangle
\]
what yields
\begin{equation*}
\Delta_{-c\eta}\chi = \bar\chi''|\nabla d|^2 + \bar\chi' \Big(-\sum_{i=1}^m\langle  II_{\mathcal{C}} ({\sf e}_i^\top, {\sf e}_i^\top) -c X, \bar\nabla d\rangle + \langle {\bf H}-cX^\perp, \bar\nabla d\rangle\Big).
\end{equation*}
In the particular case when $\mathcal{C} = \{o\}\in P$ (or equivalently, $\mathcal{C}$ is a cylinder over a geodesic ball in $P$ centered at $o$) we have
\[
\langle X, \bar\nabla d\rangle =0
\]
and 
\[
II_{\mathcal{C}} ({\sf e}_i^\top, {\sf e}_i^\top)  = II_{\mathcal{C}}({\sf e}_i - \langle {\sf e_i}, X\rangle X - \langle {\sf e}_i, \bar\nabla d\rangle\bar\nabla d, {\sf e}_i - \langle {\sf e_i}, X\rangle X - \langle {\sf e}_i, \bar\nabla d\rangle\bar\nabla d).
\]
The Hessian comparison theorem implies that
\[
II_{\mathcal{C}} ({\sf e}_i^\top, {\sf e}_i^\top)  \le -\frac{\xi'(d)}{\xi(d)} \langle {\sf e}_i - \langle {\sf e_i}, X\rangle X - \langle {\sf e}_i, \bar\nabla d\rangle\bar\nabla d, {\sf e}_i - \langle {\sf e_i}, X\rangle X - \langle {\sf e}_i, \bar\nabla d\rangle\bar\nabla d\rangle 
\]
under the assumption that the radial sectional curvatures in $P$ along geodesics issuing from $o\in P$ satisfies
\[
K_{{\rm rad}} (d) \le -\frac{\xi''(d)}{\xi(d)}\cdot
\]
Taking traces
\[
\sum_{i=1}^m II_{\mathcal{C}} ({\sf e}_i^\top, {\sf e}_i^\top)  \le -\frac{\xi'(d)}{\xi(d)} \big(m-2+|X^\perp|^2+|\bar\nabla d^\perp|^2\big) 
\]
where as above $\perp$ indicates the normal projection onto $\psi(M)$. In this particular case we have (if $\bar \xi'>0$)
\begin{eqnarray*}
\Delta_{-c\eta}\chi \ge  \bar\chi''|\nabla d|^2 +\big(m-2+|X^\perp|^2+|\bar\nabla d^\perp|^2\big)\chi' \frac{\xi'}{\xi}+\langle {\bf H}-cX^\perp, \bar\nabla d\rangle\chi'
\end{eqnarray*}
Now having fixed
\[
\chi(d) = \int^d_0 \xi (\tau)\, {\rm d}\tau
\]
one obtains
\begin{eqnarray*}
& & \Delta_{-c\eta}\chi \ge  \xi' (1-|\bar\nabla d^\perp|^2) + \big(m-2+|X^\perp|^2+|\bar\nabla d^\perp|^2\big)\xi' +\langle {\bf H}-cX^\perp, \bar\nabla d\rangle\xi\ge \\
& &\,\, =\big(m-1+|X^\perp|^2\big) \xi' +\langle {\bf H}-cX^\perp, \bar\nabla d\rangle\xi \ge \xi\bigg((m-1)\frac{\xi'(d)}{\xi(d)} - |{\bf H}-cX^\perp|\bigg)
\end{eqnarray*}
Suppose that $\psi(M)$ is contained in the cylinder over a geodesic ball of radius $d^*$ in $P$ centered at $o$. Hence 
\[
\chi \le  \bar \chi (d^*) <+\infty
\]
If
\begin{equation}
\sup_M |{\bf H}-cX^\perp| < (m-1)\frac{\xi'(d^*)}{\xi(d^*)}
\end{equation}
then
\begin{equation}
\Delta_{-c\eta}\chi(x) >0
\end{equation}
whenever 
\[
\chi(x) > \chi(d^*)-\varepsilon
\]
for some sufficiently small $\varepsilon>0$. This contradicts the weak maximum principle. The validity of this principle for the operator $\Delta_{-c\eta}$ follows from the fact that the $\psi$ is a proper immersion. Then the function $\eta = \pi\circ\psi$ satisfies
\[
\eta(x)  \to  \infty \quad \mbox{as}\quad x\to \infty
\]
and
\begin{eqnarray*}
\Delta_{-c\eta}\eta =c
\end{eqnarray*}
This is enough to guarantee the validity of the weak maximum principle.
\hfill $\square$
%
%
%

\begin{theorem} \label{thm-ultimo}
Let $P$ be a $n$-dimensional complete Riemannian manifold endowed with a rotationally invariant metric $g_0$ whose sectional curvatures are negative. There are no complete translating solitons  contained in a horocylinder of $\R \times P$.
\end{theorem}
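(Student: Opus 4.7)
I will argue by contradiction, reproducing in the general rotationally invariant negatively curved setting the barrier‐based strategy already used in the paper to rule out complete translating solitons inside horocylinders of $\mathbb{R}\times\mathbb{H}^n$. Suppose $M$ is a complete translating soliton with $M\subset\mathcal{O}=\mathbb{R}\times H$, where $H\subset P$ is a horoball with asymptotic center $p_\infty\in\partial_\infty P$.

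By Theorem~\ref{thm-rev-surf-ideal} I dispose of a one-parameter family $\{M^\infty_\varepsilon\}_{\varepsilon\ge 0}$ of translating solitons foliated by horospheres in parallel slices $P_t$, each with ideal points on an asymptotic line $\mathbb{R}\times\{x_\infty\}$. Applying a rotation of $P$ around its rotational center (extended trivially to an isometry of $\bar M$), I may assume $x_\infty=p_\infty$, so that the horospheres foliating $M^\infty_\varepsilon$ and the horosphere bounding $\mathcal{O}$ are all centered at the same ideal point. Combined with vertical translations $\Phi_s$ and the neck parameter $\varepsilon$, this yields a two-parameter family of translating solitons
\[
\mathcal{F}=\{\Phi_s(M^\infty_\varepsilon):s\in\mathbb{R},\ \varepsilon\ge 0\}.
\]

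The core of the argument is a sweep. For extreme values of the parameters $(s,\varepsilon)$ the barrier $\Phi_s(M^\infty_\varepsilon)$ can be arranged to lie, up to the region of interest, disjoint from $M$; moving the parameters continuously, and using the degeneration of $M^\infty_\varepsilon$ to a pair of ideal bowl solitons $M^\infty_0$ as $\varepsilon\to 0^+$ (analogous to the degeneration exploited in the $\mathbb{H}^n$ section), the barrier sweeps into $\mathcal{O}$. Because $M$ has no boundary and the ends of $M^\infty_\varepsilon$ are asymptotically controlled (by Proposition~\ref{u-asymp} together with Hessian comparison for Busemann functions on Cartan-Hadamard manifolds of negative curvature), there is a first parameter value $(s_*,\varepsilon_*)$ at which $\Phi_{s_*}(M^\infty_{\varepsilon_*})$ touches $M$, and this tangency occurs at a finite interior point of $M$. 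At the contact point both hypersurfaces satisfy the quasilinear elliptic translating-soliton equation~(\ref{PDE-soliton-1}) and lie locally on the same side of one another; the tangential maximum principle forces them to coincide in a neighbourhood of the contact, and hence globally by connectedness. However, $\Phi_{s_*}(M^\infty_{\varepsilon_*})$ is foliated by horospheres centered at $p_\infty$ exhausting every Busemann level, so its $P$-projection cannot lie inside any single horoball. This contradicts $M\subset\mathcal{O}$ and proves the theorem.

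The main obstacle, just as in the $\mathbb{H}^n$ prototype, is to certify rigorously that the sweep produces a first contact at a finite interior point rather than an escape to $t=\pm\infty$ or to $\partial_\infty P$. Unlike $\mathbb{H}^n$, a generic rotationally invariant $P$ admits no hyperbolic translations among its ambient isometries, so one must replace the hyperbolic-translation sweep of the $\mathbb{H}^n$ argument by the combined $\varepsilon$- and $s$-sweep described above, together with the explicit description of the degeneration $M^\infty_\varepsilon\to 2\,M^\infty_0$ and the Hessian-comparison control on horospheres at $p_\infty$ provided by negative sectional curvature.
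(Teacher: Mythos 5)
Your proposal follows essentially the same route as the paper's own proof: sweep the family of horosphere-foliated solitons $M_\varepsilon^\infty$ from Theorem \ref{thm-rev-surf-ideal}, starting disjoint from $M$ for large $\varepsilon$, use the degeneration to two copies of $M_0^\infty$ as $\varepsilon \to 0$ to force a first point of contact at a finite interior point, and conclude via the maximum principle that $M$ coincides with a barrier that is manifestly not contained in any horocylinder. Your two additions — the explicit rotation aligning the ideal point of the barriers with that of $\mathcal{O}$, and the extra vertical-translation parameter $s$ — are harmless refinements of steps the paper leaves implicit, and your acknowledged gap (ruling out contact at infinity) is handled in the paper at the same level of brevity, by appeal to the asymptotic behaviour of $M_{\varepsilon_0}^\infty$.
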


\begin{proof}
We proceed by contradiction. Let $\mathcal{O}$ be a horocylinder in $\R \times P$ and assume that there is a complete soliton $M \subset \mathcal{O}$. Assume that the ideal points of $\mathcal{O}$ lie on the line $\R \times\{x_\infty\}$, where $x_\infty \in \partial_\infty P$. 

Then, we consider the family of solitons  $\{M_\varepsilon ^\infty, \; \varepsilon \in [0,+\infty)\}$ given by Theorem \ref{thm-rev-surf-ideal}. Fix $\varepsilon>0$ large enough so that the horocylinder $\mathcal{O}$  is contained in the mean convex region of $(\R \times P)-M_\varepsilon^\infty,$ then $M_\varepsilon^\infty \cap M= \varnothing.$ The limit as $ \varepsilon \to 0$ of the hypersurfaces in this family  consists of two copies of $M_0^\infty$ which intersect $M$. Then there is a first point of contact between an element $M^\infty_{\varepsilon_0}$ and $M$. By the asymptotic behaviour of $M^\infty_{\varepsilon_0}$, this contact cannot occur at infinity. So, there must be an interior point of contact and using the maximum principle, we deduce that $M^\infty_{\varepsilon_0}=M$, which is a contradiction since $M^\infty_{\varepsilon_0}$ is not contained in $\mathcal{O}.$
\end{proof}
As a corollary we get that there are no complete cylindrically bounded translating solitons.
\begin{corollary}
Let $P$ be a $n$-dimensional complete Riemannian manifold endowed with a rotationally invariant metric $g_0$ whose sectional curvatures are negative. There are no complete translating solitons  contained in a cylinder  of $\R \times P$.
\end{corollary}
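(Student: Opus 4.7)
The plan is to derive the corollary as an almost immediate consequence of Theorem \ref{thm-ultimo} by showing that every cylinder in $\mathbb{R}\times P$ over a geodesic ball of $P$ is contained in some horocylinder of $\mathbb{R}\times P$. Once this containment is established, a complete translating soliton sitting inside a cylinder would automatically sit inside a horocylinder, contradicting Theorem \ref{thm-ultimo}.

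First I would recall that under the hypotheses $P$ is a Cartan--Hadamard manifold, so the exponential map at any point $o\in P$ is a diffeomorphism and the asymptotic boundary $\partial_\infty P$ is non-empty. Fix an ideal point $x_\infty\in\partial_\infty P$ and let $b:P\to\mathbb{R}$ be the Busemann function associated to a geodesic ray issuing from $o$ with endpoint $x_\infty$, normalized by $b(o)=0$. The sublevel sets $\mathcal{H}_a=\{p\in P : b(p)\leq a\}$ are horoballs based at $x_\infty$, and the cylinders $\mathbb{R}\times \mathcal{H}_a$ are precisely the horocylinders appearing in the statement of Theorem \ref{thm-ultimo}.

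Next, I would invoke the standard fact that Busemann functions are $1$-Lipschitz. Given any geodesic ball $B_R(o)\subset P$, for every $p\in B_R(o)$ one has
\[
b(p)=b(p)-b(o)\leq d(p,o)<R,
\]
so $B_R(o)\subset \mathcal{H}_R$. Consequently, every cylinder over a geodesic ball satisfies
\[
\mathbb{R}\times B_R(o)\subset \mathbb{R}\times \mathcal{H}_R,
\]
and the right-hand side is a horocylinder of $\mathbb{R}\times P$.

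To finish, I would argue by contradiction: suppose $M$ is a complete translating soliton contained in some cylinder $\mathbb{R}\times B_R(o)$. By the containment just established, $M$ is also contained in the horocylinder $\mathbb{R}\times \mathcal{H}_R$, contradicting Theorem \ref{thm-ultimo}. Hence no such $M$ exists. There is essentially no technical obstacle in this argument; the only substantive input is the Cartan--Hadamard structure ensuring that $\partial_\infty P$ is non-empty and that Busemann functions exist and are globally defined and $1$-Lipschitz, while the geometric content lies entirely in the previously proved Theorem \ref{thm-ultimo}.
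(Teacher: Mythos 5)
Your proof is correct and follows exactly the route the paper intends: the paper states this as an immediate corollary of Theorem \ref{thm-ultimo}, the implicit point being that any cylinder over a geodesic ball lies inside a horocylinder. Your use of the $1$-Lipschitz property of Busemann functions simply makes explicit the containment $B_R(o)\subset\{b\leq R\}$ that the paper leaves unstated.
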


\begin{remark}
Notice that the proof of Theorem \ref{thm-ultimo} still works if $M$ has compact boundary and the Euclidean coordinate is unbounded on $M$.
\end{remark}

\begin{corollary}
Let $P$ be a $n$-dimensional complete Riemannian manifold endowed with a rotationally invariant metric $g_0$ whose sectional curvatures are negative. There are no complete translating graphs over a domain contained in a horodisc in $P$.
\end{corollary}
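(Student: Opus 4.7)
The plan is to reduce the statement immediately to Theorem \ref{thm-ultimo}. Suppose, for contradiction, that $u\colon \Omega \to \R$ is a $C^2$ solution of the translating soliton equation (\ref{PDE-soliton-1}) on a domain $\Omega$ contained in some horodisc $D \subset P$, and that the graph
\[
M = \{(u(x), x) : x \in \Omega\}
\]
is complete in the metric induced by the ambient $\R\times P$. The first step is the trivial ambient inclusion
\[
M \;\subset\; \R\times \Omega \;\subset\; \R\times D,
\]
so $M$ lies entirely inside the solid horocylinder over $D$.

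Next, $M$ is a codimension-one translating soliton with respect to $X=\partial_t$ precisely because $u$ satisfies (\ref{PDE-soliton-1}), and by hypothesis it is complete. Since $g_0$ is rotationally invariant with negative sectional curvatures, the hypotheses of Theorem \ref{thm-ultimo} are all met. That theorem forbids the existence of any complete translating soliton contained in a horocylinder of $\R \times P$, which produces the desired contradiction and proves the corollary.

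There is essentially no new analytic content to carry out here: the barrier family $\{M_\varepsilon^\infty\}_{\varepsilon \ge 0}$ of ideal wing-like solitons supplied by Theorem \ref{thm-rev-surf-ideal}, together with the maximum-principle sweeping argument, have already done all the geometric work inside the proof of Theorem \ref{thm-ultimo}. The only point one might be tempted to worry about is whether the containment is genuinely in a horocylinder rather than merely in some horizontal slab or cylinder over a larger set, but this is automatic: being a graph over $\Omega$ forces the horizontal projection of $M$ to coincide with $\Omega \subset D$. Thus the corollary is, in effect, a reformulation of Theorem \ref{thm-ultimo} in the graphical language of PDE.
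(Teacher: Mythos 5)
Your proposal is correct and coincides with the paper's (implicit) argument: the corollary is stated there without a separate proof precisely because it reduces immediately to Theorem \ref{thm-ultimo}, via the observation that a complete graph over a domain $\Omega$ contained in a horodisc $D$ is a complete translating soliton contained in the solid horocylinder $\mathbb{R}\times D$. Your handling of the only potential subtlety (that the horizontal projection of a graph is exactly $\Omega\subset D$, so the containment really is in a horocylinder) is also the right one.
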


%
%
%
%
%

\end{document}